\documentclass[a4paper,envcountsame]{llncs}
\usepackage[T1]{fontenc}
\usepackage[utf8]{inputenc}
\usepackage{graphicx}
\usepackage{amsmath}
\usepackage{a4wide}
\usepackage{graphics}
\usepackage{amssymb}
\usepackage{algorithmic}
\usepackage{algorithm}
\usepackage{multirow}
\usepackage{tikz}
\usetikzlibrary{shapes.geometric,positioning,calc,decorations,
    decorations.pathmorphing,decorations.pathreplacing,chains,scopes}
\usepackage{hyperref}

\def\cO{\ensuremath{\mathcal{O}}}
\def\fkl{\ensuremath{\mathfrak{l}}}
\def\fkp{\ensuremath{\mathfrak{p}}}

\def\fkP{\ensuremath{\mathfrak{P}}}
\def\fka{\ensuremath{\mathfrak{a}}}
\def\fkb{\ensuremath{\mathfrak{b}}}
\def\fkf{\ensuremath{\mathfrak{f}}}
\def\fku{\ensuremath{\mathfrak{u}}}
\def\fkC{\ensuremath{\mathfrak{C}}}
\def\bF{\ensuremath{\mathbb{F}}}
\def\bC{\ensuremath{\mathbb{C}}}

\def\bZ{\ensuremath{\mathbb{Z}}}
\def\bQ{\ensuremath{\mathbb{Q}}}
\newcommand{\F}{\mathbb{F}}

\newcommand{\Q}{\mathbb{Q}}

\newcommand{\Z}{\mathbb{Z}}

\newcommand{\C}{\mathbb{C}}
\newcommand{\R}{\mathbb{R}}

\def\coloneq{\mathrel{:=}}
\DeclareMathOperator{\End}{End}

\DeclareMathOperator{\Norm}{Norm}
\DeclareMathOperator{\Ker}{Ker}
\DeclareMathOperator{\lcm}{lcm}
\DeclareMathOperator{\Sp}{Sp}
\DeclareMathOperator{\Tr}{Tr}
\DeclareMathOperator{\Cl}{Cl}
\let\div\relax
\DeclareMathOperator{\div}{div}

\newcommand{\mtodo}[2][red]{\tikz[remember
picture]\coordinate(xtodo);\marginpar{%
    \begin{tikzpicture}[remember picture]
        \node[rectangle,draw=#1,rounded corners,fill=#1!20,thick] (ytodo) {%
        \begin{minipage}{\marginparwidth}\footnotesize\raggedright #2
        \end{minipage}};
        \draw[overlay,#1,opacity=0.5] (xtodo) to[bend left=15] (ytodo);
    \end{tikzpicture}}\kern0pt}
\def\RPhi{\mathrm\Phi}
\pagestyle{plain}


\title{Isogeny graphs with maximal real multiplication}
\author{Sorina Ionica \inst{1,2} \and Emmanuel Thom\'e \inst{3}}
\institute{Institut de Mathématiques de Bordeaux \& Inria Bordeaux - Sud-Ouest\\
  200, avenue de la Vieille Tour--33405 Talence -- France
  \and
   Université de Picardie Jules Verne\\
       33 Rue Saint Leu Amiens 80039--France\\
     \and
     CARAMBA Project -- INRIA Nancy Grand Est\\
     615 rue du Jardin Botanique--54602 Villiers-les-Nancy -- France
  }

\begin{document}

\maketitle

\begin{abstract}
    An isogeny graph is a graph whose vertices are principally polarizable
    abelian varieties and whose edges are isogenies between these
    varieties.  In his thesis, Kohel describes the structure of isogeny
    graphs for elliptic curves and shows that one may compute the
    endomorphism ring of an elliptic curve defined over a finite field by
    using a depth-first search (DFS) algorithm in the graph. In dimension
    2, the structure of isogeny graphs is less understood and existing
    algorithms for computing endomorphism rings are very expensive.  In
    this article, we show that, under certain conditions, the problem
    of determining the endomorphism ring can also be solved in genus~2
    with a DFS-based algorithm. We consider the case of genus-2 Jacobians
    with complex multiplication, with the assumptions that the real
    multiplication subring has class number one and is locally maximal at $\ell$, for $\ell$ a fixed prime. We describe the isogeny graphs in that case, by considering cyclic isogenies of degree $\ell$, under the assumption that there is an ideal $\fkl$ of norm $\ell$ in $K_0$ which is generated by a totally positive algebraic integer. The resulting algorithm is implemented over finite fields, and examples are provided. To the best of our knowledge, this is the first DFS-based algorithm in genus~2.
\end{abstract}

\section{Introduction}


Isogeny graphs are graphs whose vertices are simple principally
polarizable abelian varieties (p.p.a.v.) and whose edges are isogenies between
these varieties. Isogeny graphs were first studied by Kohel~\cite{Kohel},
who proves that in the case of elliptic curves, we may use these
structures to compute the endomorphism ring of an elliptic curve. Kohel
identifies three types of $\ell$-isogenies (i.e. of degree~$\ell$) in the
graph: ascending, descending and horizontal. The ascending (descending) type corresponds to the case of an isogeny between two elliptic curves, such that the
endomorphism ring of the domain (co-domain) curve is contained in the endomorphism ring of the co-domain (domain) curve. The horizontal type is that of an isogeny between two genus 1 curves with isomorphic endomorphism rings. As a consequence, computing the $\ell$-adic valuation of the conductor of the endomorphism ring can be done by a depth-first search algorithm in the isogeny graph~\cite{Kohel}.  In the case of genus-2 Jacobians, designing a similar algorithm for
endomorphism ring computation requires a good understanding of the
isogeny graph structure.

Let $K$ be a primitive quartic CM field and $K_0$ its totally real
subfield. In this paper, we study subgraphs of isogenies whose vertices
are all genus-2 Jacobians with endomorphism ring isomorphic to an order
of $K$ whose real multiplication suborder is locally maximal at $\ell$.
Furthermore, we assume that $\cO_{K_0}$ is principal, that there is a degree 1 ideal $\fkl$ lying over $\ell$ in $\cO_{K_0}$, and that this ideal is generated by a totally positive algebraic integer.

We show that the lattice of orders meeting these conditions has a simple
2-dimensional grid structure when we localize orders at $\ell$. This
results into a classification of isogenies in the isogeny graph into
three types: ascending, descending and horizontal, where these
qualificatives apply separately to the two ``dimensions'' of the lattice
of orders. Moreover, we consider $\ell$-isogenies, which are a
generalization of $\ell$-isogenies between elliptic curves to the higher
dimensional principally polarized abelian varieties (see~Definition
\ref{ell-isogeny}). We show that any $\ell$-isogeny that is such that the
two endomorphism rings contain $\cO_{K_0}$ is a composition of two
isogenies of degree $\ell$ that preserve real multiplication. As a consequence, we design a depth-first search algorithm for computing endomorphism rings in the
$\ell$-isogeny graph, based on Cosset and Robert's algorithm for
constructing $\ell$-isogenies over finite fields. To the best of
our knowledge, this is the first depth-first search algorithm for
computing locally at small prime numbers $\ell$ the endomorphism ring of
an ordinary genus-2 Jacobian. With our method, as well as with the
Eisenträger-Lauter algorithm~\cite{EisLau}, the dominant part of the
complexity is given by the computation of a subgroup of the
$\ell$-torsion. Our analysis shows that our algorithm performs faster,
since a smaller torsion subgroup is computed, defined over a smaller
field.  

This paper is organized as follows. Section~\ref{sec:background} provides
background material concerning isogeny graphs, $\cO_{K_0}$-orders of
quartic CM fields, as well as the definition and some properties of the
Tate pairing. In Section~\ref{Isogenies} we give formulae for cyclic
isogenies between principally polarized complex tori with maximal real
multiplication, and describe the structure of the graph whose edges are these isogenies.
From this, in Section~\ref{GraphStructure} we deduce the structure of the graph whose vertices are p.p.a.v. 
with maximal real multiplication, defined over finite fields, and whose edges are cyclic isogenies between
these varieties. In Section~\ref{PairingTheGraph} we show
that the computation of the Tate pairing allows us to orient ourselves in
the isogeny graph. Finally, in Section~\ref{sec:algorithm} we give our
algorithm for endomorphism ring computation when the real multiplication
is maximal, compare its performance to the one of Eisenträger and
Lauter's algorithm, and report on practical experiments over finite
fields.

\subsubsection*{Related work.}
Our work is publicly available at~\url{https://arxiv.org/abs/1407.6672}
and focuses on studying a graph structure between principally polarized
abelian varieties. For generalizations of this work to the case where the
vertices of the graph are abelian varieties with non-principal fixed
polarizations, the reader is referred on the one hand to the the more recent work of Hunter Brooks \textit{et al.}~\cite{Brooks} that takes a $p$-adic
approach to prove this graph structure. The recently defended thesis of Chloe Martindale~\cite{Martindale} also revisits this construction, using a complex-analytic approach.

We present results regarding an isomorphism between an isogeny graph
between abelian varieties defined over finite fields and the graph of
their canonical lifts (Section~\ref{GraphStructure}). To the best of our
knowledge, these results are not to be found anywhere else in the
literature. This graph isomorphism is used in several steps of the proof
developed in~\cite{Brooks} (e.g.\ proof of Proposition 5.1 in~\cite{Brooks}, as well as the remark on page 20 on that same paper,
regarding the fact that the Shimura class group action is free).

Finally, from an algorithmic point of view, \cite{Brooks} focuses on
applications that need to compute, from a given abelian variety with CM,
an isogeny path towards an abelian variety with maximal complex
multiplication. The present work proposes an algorithm for computing
endomorphism rings, via a depth first search method. To this purpose, we
present several results on the Tate pairing (see
Section~\ref{PairingTheGraph}) which are not to be found elsewhere in the literature.

\section*{Acknowledgements}

This work originates from discussions during a visit at University of
Caen in November 2011. We are indebted to John Boxall for sharing his
ideas regarding the computation of isogenies preserving the real
multiplication, and providing guidance for improving the writing of this
paper. We thank David Gruenewald, Ben Smith and Damien Robert
for helpful and inspiring discussions. Finally, we are grateful to Marco
Streng and to Gaetan Bisson for proofreading an early version of this
manuscript. 

\section{Background and notations}
\label{sec:background}

It is well known that in the case of elliptic curves with complex
multiplication by an imaginary quadratic field $K$, the lattice of orders
of $K$ has the structure of a tower. This results in an easy way to
classify isogenies and navigate in isogeny graphs~\cite{Kohel,FouMor,IonJou2}. Throughout this paper, we are concerned with the genus~2 case.

Let then $K$ be a primitive quartic CM field, with totally real subfield $K_0$.
Principally polarized abelian surfaces considered in this paper are
assumed to be \emph{simple}, i.e. not isogenous to a product of elliptic curves
 over the algebraic closure of their field of definition. The
quartic CM field $K$ is primitive, i.e. it does not contain a totally
imaginary subfield. A CM-type ${\RPhi}$ is a pair of non-complex conjugate embeddings of $K$ in $\C$
\begin{eqnarray*}
    \RPhi(z)=\{\phi_1(z),\phi_2(z)\}.
\end{eqnarray*}
\noindent
\let\xoplus+
We assume that $K_0$ has class number one.
This implies in particular that the maximal order $\cO_K$ is a module over the principal
ideal ring $\cO_{K_0}$, whence we may define $\eta$ such that
\begin{eqnarray}\label{eta}
\cO_K=\cO_{K_0}\xoplus \cO_{K_0}\eta.
\end{eqnarray}
The notation $\eta$ will be retained throughout the paper.
For an abelian variety $A$ defined over a perfect field $F$ we denote by $\End(A)$ the endomorphism ring of $A$ over the algebraic closure of $F$ and by $\End^0(A)=\End(A)\otimes \Q$. 
 
Several results of the article will involve a prime number $\ell$ and
also the finite field $\bF_q$ ($q=p^n$, with $p$ prime). We always implicitly
assume that $\ell$ is coprime to $p$. For an order $\cO$ in $K$ or $K_0$, we denote its localization at $\ell$ by $\cO_{\ell}=\cO\otimes \Z_{\ell}$. Note that the case which matters for our point of view is when $\ell$ splits as two distinct degree-one prime ideals $\fkl_1$ and $\fkl_2$ in $\cO_{K_0}$. How the ideals
$\fkl_{1}$ and $\fkl_{2}$ split in $\cO_K$ is not determined a priori, however.

\subsection{Isogeny graphs: definitions and terminology}
\label{GraphDefinition}

In this paper, we are interested in isogeny graphs whose nodes are all isomorphism classes of principally polarizable abelian surfaces (i.e. Jacobians of hyperelliptic genus-2 curves) with CM by $K$ and whose edges are isogenies between them, up to isomorphism. 

    \begin{definition}
        Let $I: A\rightarrow B$ be an isogeny between polarized
        abelian varieties and let $\lambda$ be a fixed polarization on $B$. The induced polarization on $A$, that we denote by $I^*\lambda$, is defined by
        \begin{eqnarray*}
            I^*\lambda=\hat{I}\circ \lambda\circ I.
       \end{eqnarray*}
      \end{definition}

\noindent
We denote by $(A,\lambda)$ a polarized abelian variety with a fixed polarization $\lambda$. We recall here the definition of an $\ell$-isogeny. 

\begin{definition}\label{ell-isogeny}
Let $I: (A,\lambda_A)\rightarrow (B,\lambda_B)$ be an isogeny between
principally polarized abelian varieties. We will say that $I$ is an $\ell$-isogeny if $I^*\lambda_B=\ell \lambda_A$.
\end{definition}
One can easily see that these isogenies have degree $\ell^2$ and have
kernel isomorphic to $\Z/\ell \Z\times \Z/\ell \Z$. The fact that for
$I: (A,\lambda_A)\rightarrow (B,\lambda_B)$, one has $I^*\lambda_B=\ell
\lambda_A$ is equivalent to $\Ker I$ being maximal isotropic with respect
to the Weil pairing, i.e. the $\ell$-Weil pairing restricts trivially to $\Ker I$ and $\Ker I$ is not properly contained in any other such subgroup (see~\cite[Prop. 13.8]{Milne}). Note that in the literature these isogenies are sometimes called $(\ell,\ell)$-isogenies (see for instance~\cite{LubRob,CosRob}). Since $\ell$-isogenies are a generalization of genus-1 $\ell$-isogenies, a natural
idea would be to consider the graph given by $\ell$-\textit{isogenies}
between principally polarized abelian surfaces. Recent developments on
the construction of $\ell$-isogenies~\cite{LubRob,CosRob} allowed us to
compute examples of isogeny graphs over finite fields, whose edges are
rational $\ell$-isogenies~\cite{AVISOGENIES}. It was noticed in this way
that the corresponding lattice of orders has a much more complicated
structure when compared to its genus-1 equivalent. Figure~\ref{llgraph}
displays an example of an $\ell$-isogeny graph. Identification of each variety to its dual, makes this graph non-oriented. The corresponding lattice of
orders contains two orders of index 3 (in the maximal order), which are
not contained one in the other. The existence of rational isogenies
between Jacobians corresponding to these two orders shows that we cannot
classify isogenies into ascending/descending and horizontal ones. This is
a major obstacle to designing a depth-first search algorithm for
computing the endomorphism ring.

\begin{figure}[h!]
\begin{center}
\includegraphics[scale=0.4,angle=90]{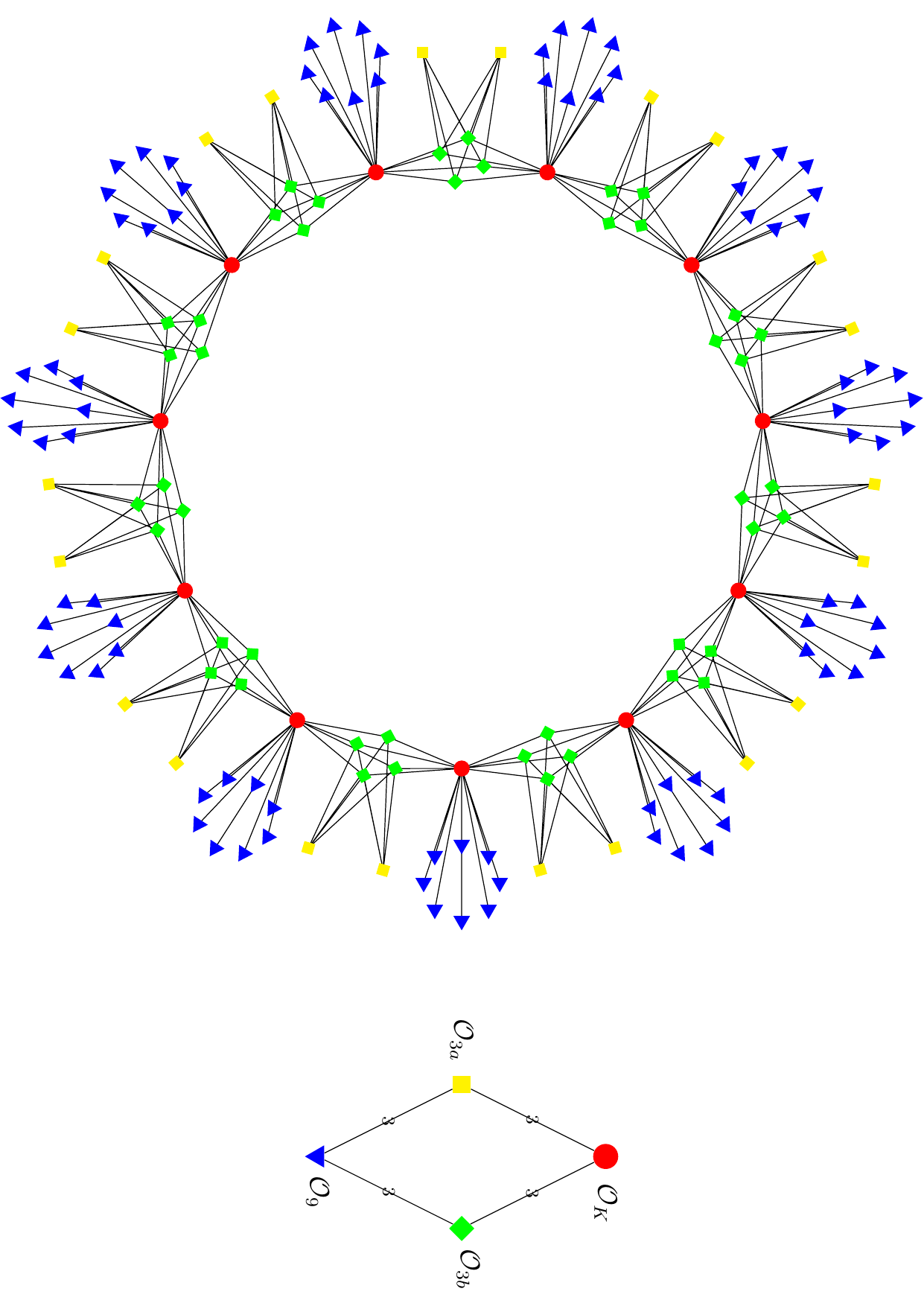}
\caption{\label{llgraph} Example of an $\ell$-isogeny graph for
    $\ell=3$ defined over a finite field $\F_p$, with $p=211$ and $K$
defined by $\alpha^4+81\alpha^2+1181$.}
\end{center}
\end{figure}

Finally, to introduce the isogeny graph of principally polarized abelian varieties with complex multiplication, we will also need the following result, which was communicated to us by Damien Robert~\cite{Robert}. 


\begin{lemma}\label{LemmaDamien}
If $I:(A,\lambda_A)\rightarrow (B,\lambda_B)$ is an
isogeny between principally polarizable abelian varieties, then the
homomorphism corresponding to the induced polarization can be written as 
$I^*\lambda_B=\lambda_{A}\circ \phi$, where $\phi$ is a real
endomorphism.
\end{lemma}

We recall the definition of an abelian variety with complex (respectively real)
multiplication.

\begin{definition}
Let $A$ be a principally polarized abelian variety. Let $K$ be a
    quartic CM field, and $K_0$ its totally real subfield.
    \begin{enumerate}
        \item We say that a pair $(A,\iota)$ is an \emph{abelian variety with complex multiplication} by an order $\cO\subset K$ if there is a morphism of \bQ-algebras $\iota: K \hookrightarrow \End^0(A)$ such that $\iota^{-1}(\End(A))=\cO$
 induces a ring isomorphism between $\cO$ and $\End(A)$.
        \item Similarly, we say that a pair $(A,\iota_0)$ is an \emph{abelian variety with real multiplication} by an order $\cO_0\subset K_0$ if there is a morphism of \bQ-algebras $\iota_0: K_0 \hookrightarrow \End^0(A)$ and such that $\iota_0^{-1}(\End(A)\cap\iota_0(K_0))=\cO_0$. 
    \end{enumerate}
\end{definition} 

By the definition above, an abelian variety may have complex (resp. real)
multiplication by only one isomorphism class of orders of the lattice of orders of $K$ (resp. $K_0$).
We also note that if $(A,\iota)$ has complex multiplication by $\cO\subset K$, then $(A,\iota|_{K_0})$ has real multiplication by $\cO\cap K_0$. In this work, we fix a quartic CM field $K$ and we consider abelian varieties having complex multiplication by an order in $K$.

  %
  %
  %

\noindent
Let $A$ and $B$ be two principally polarized abelian varieties with real
(resp. complex) multiplication by an order $\cO$ and let $I:A\rightarrow B$ be a separable isogeny. We denote by $e_I$ the exponent of $I$ (i.e. the exponent of the finite group $\Ker(I)$). Since $\Ker(I) \subset A[e_I]$ (where $A[e_I]$ is the $e_I$-torsion subgroup of $A$), there is a unique isogeny $I'$ such that $II'=[e_I]$. We define the following map:

\begin{eqnarray*}
\Theta_B: \End^0(B) &\rightarrow &\End^0(A)\\
\phi &\rightarrow &\frac{1}{e_I}I'\circ \phi \circ I.\\
\end{eqnarray*}

With this in hand, we say that $I$ is an \textit{isogeny between abelian
varieties with real (resp. complex)  multiplication} by a field $K_0$
(resp. $K$) if the following diagram involving the solid arrows is commutative. (Equivalently, the diagram obtained by using $\Theta_A$ instead of $\Theta_B$ is also commutative.)
\begin{center}
\begin{tikzpicture}
    \node (ellO) at (0,2){$K_0 (\textrm{resp.}~K)$}; 
    \node (B) at (3,2)  {$\End^0(B)$}; 
    \node (A) at (3,0) {$\End^0(A)$};
    \draw[->] (ellO) to node[above] {\small{$\iota_B$}} (B);
    \draw[->] (ellO) to node[left] {\small{$\iota_A$}} (A);
    \draw[->,bend right=28] (B) to node[left] {\small{$\Theta_{B}$}} (A);
    \draw[<-,bend left=28, dashed] (B) to node[right] {\small{$\Theta_{A}$}} (A);
\end{tikzpicture}
\end{center}



In this work, we denote by $(A,\lambda,\iota)$ a principally polarized
abelian variety with complex multiplication.  
Note that here and throughout the paper, we shall only
distinguish isogenies up to isomorphism, regarding isogenies $I_1:A\rightarrow B$ and $I_2:A \rightarrow B$ as equivalent if $I_1=i_2\circ I_2 \circ i_1$ for any automorphisms $i_1:A\rightarrow B$ and $i_2:A\rightarrow B$. The approach we will take here is to consider the graph of \textit{all} (equivalence classes of) isogenies between principally polarizable abelian surfaces and decompose it into subgraphs whose vertices are abelian surfaces with real multiplication by a fixed order $\cO$ of $K_0$.

\begin{definition}\label{def:isogeny-graph}
Let $F$ be a perfect field. An isogeny graph of principally polarized abelian varieties defined over $F$ is a graph such that:
\begin{enumerate}
\item The vertices are isomorphism classes of principally polarized abelian varieties $(A,\lambda,\iota)$ with complex multiplication.
\item There is an edge between two classes $(A,\lambda_A,\iota_A)$ and $(B,\lambda_B,\iota_B)$ whenever there is an isogeny $I:A\rightarrow B$ between abelian varieties with CM, such that $\lambda_BI^*= \lambda_{A}\circ \phi$, for $\phi$ some real endomorphism. 
\end{enumerate}
\end{definition}

\begin{definition}
With the notation above, let $(A,\iota_A)$ and $(B,\iota_B)$ two abelian varieties with complex multiplication by a CM field $K$ and $I:A\rightarrow B$ an isogeny between them. We say that $I$ preserves real multiplication by an order \cO\ in $K_0$ if both $A$ and $B$ have real multiplication by \cO.
\end{definition}

As a consequence, for an order $\cO$ in $K_0$, we call \textit{$\cO$-layer in the graph given by Definition~\ref{def:isogeny-graph}} the subgraph whose vertices are all equivalence classes of p.p.a.v with RM by $\cO$ and whose edges are isogenies preserving real multiplication by $\cO$.
Understanding the structure of the graph then comes down to explaining
the structure of each layer and in a later step classifying isogenies beween two vertices lying at different layers of the graph. 


In this paper, we fully describe the structure of the $\cO_{K_0}$-layer. Working towards this goal, we first identify cyclic isogenies of degree $\ell$ between principally polarizable abelian varieties with maximal real multiplication. We will show in Section~\ref{Isogenies} that a sufficient condition to guarantee the existence of isogenies of degree $\ell$ between principally polarized abelian varieties is that there is a principal ideal in $\cO_{K_0}$ of degree 1 and norm $\ell$, whose generator is totally positive. 

As a consequence, we chose to focus on the case where $\ell$ splits in $\cO_{K_0}$ into two principal ideals. Under these restrictions, we describe the simple and interesting structure of the graph of cyclic isogenies, which fits into the ascending/descending and horizontal framework. Using this graph structure, we characterize all isogenies between principally polarized abelian surfaces which preserve maximal real multiplication. This leads in particular to viewing Figure~\ref{llgraph} as derived from a more structured graph, whose characteristics are well explained.

The case when $\ell$ is ramified the graph structure is similar, as
explained in Section~\ref{Isogenies} (Remark~\ref{ramifiedCase}). In
the case of $\ell$ inert, one can see easily from Lemma~\ref{LemmaDamien} that there are no degree~$\ell$ isogenies between principally polarizable abelian varieties with CM by $K$, preserving real multiplication. Indeed, if there were, this would imply the existence of a norm $\ell$ element $\alpha\in \cO_{K_0}$. We chose not to treat the case of $\ell$ inert in this work.


\subsection{The lattice of \texorpdfstring{$\cO_{K_0}$}{OK0}-orders in a
quartic CM field \texorpdfstring{$K$}{K}}\label{subsec:LatticeOfOrders}

A major obstacle to explaining the structure of genus~2
isogeny graphs is that the lattice of orders of $K$ lacks a concise
description. Given an isogeny $I:A \rightarrow B$ between two
abelian surfaces with degree~$\ell$, the corresponding endomorphism rings
are such that $\ell \cO_{A}\subset \cO_{B}$ and $\ell \cO_{B}\subset \cO_{A}$. Hence, even if a inclusion relation is guaranteed $\cO_{B}\subset \cO_{A}$, the index of one order in the other is bounded by $\ell^3$. Since the $\Z$-rank of orders is 4, there could be several suborders of $\cO_{A}$ with the same
index.

In this paper, we study the structure of the isogeny graph between
abelian varieties with maximal real multiplication. The first step in
this direction is to describe the structure of the lattice of orders of
$K$ which contain $\cO_{K_0}$. Following~\cite{GorLau}, we call such an
order an $\cO_{K_0}$-order. We study the conductors of such orders. We
recall that the conductor of an order~$\cO$ is the ideal
\begin{eqnarray*}
\fkf_{\cO}=\{x\in \cO_K \mid x\cO_K\subset \cO\}.
\end{eqnarray*}
\begin{lemma}\label{Goren++}
Let $K$ be a quartic CM-field and $K_0$ its real
    multiplication subfield. Assume that the class number of $K_0$ is 1.
    Then the following hold:
\begin{enumerate}

\item Given $\alpha\in \cO_{K_0}$, $\cO=\cO_{K_0}[\alpha\eta]$ is an $\cO_{K_0}$-order of conductor $\alpha \cO_{K_0}$.

\item For any $\cO_{K_0}$-order $\cO$ of $K$ there is $\alpha\in \cO_{K_0}$, $\alpha \neq 0$ such that $\cO=\cO_{K_0}[\alpha\eta]$. The element~$\alpha$ is unique up to units of $\cO_{K_0}$. 
   
\end{enumerate}
\end{lemma}
\begin{proof}
Statements~1 and~2 were given by Goren and Lauter~\cite{GorLau}, and
    characterize $\cO_{K_0}$-orders completely in our case. 
    
\end{proof}

As a consequence, we get the following result. 
\begin{lemma}
Any $\cO_{K_0}$-order is a Gorenstein order.
\end{lemma}

\begin{proof}
This is a consequence of the fact that $\cO$ is monogenic over $\cO_{K_0}$, hence the argument of~\cite[Example 2.8 and Prop. 2.7]{BuchmannLenstra} applies.\qed
\end{proof}

A first consequence of Lemma~\ref{Goren++} is that there is a bijection
between $\cO_{K_0}$-orders and principal ideals in $\cO_{K_0}$, which
associates to every order the ideal $\fkf\cap\cO_{K_0}$. For
brevity we still call the latter the conductor and denote it by~$\fkf$.

Using the particular form of $\cO_K$ as a monogenic $\cO_{K_0}$-module,
we may rewrite the conductor differently. For a fixed element $\omega
\in \cO_K$, we define the conductor of \cO\ with respect to~$\omega$ to
be the ideal
\begin{eqnarray*}
\fkf_{\omega,\cO}=\{x\in \cO_K \mid x\omega \in \cO\}.
\end{eqnarray*}
The following statement is an immediate consequence of
Lemma~\ref{Goren++}.

\begin{lemma}\label{secondConductor} 
For any $\cO_{K_0}$-order $\cO$ and any~$\eta$ such that
$\cO_K=\cO_{K_0}[\eta]$, we have $\fkf_{\cO}=\fkf_{{\eta},\cO}$.
\end{lemma}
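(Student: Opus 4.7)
The plan is to invoke Lemma~\ref{Goren}, which lets us write $\cO=\cO_{K_0}[m\eta]=\cO_{K_0}\oplus m\cO_{K_0}\eta$ for some $m\in\cO_{K_0}$ and gives $\fkf_\cO=m\cO_K$ (so the associated $\cO_{K_0}$-ideal is $\fkf=m\cO_{K_0}$).

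One inclusion is immediate: since $\eta\in\cO_K$, any $x$ with $x\cO_K\subseteq\cO$ satisfies $x\eta\in\cO$, whence $\fkf_\cO\subseteq\fkf_{\eta,\cO}$. For the reverse, I would exploit the decomposition $\cO_K=\cO_{K_0}+\cO_{K_0}\eta$ to rewrite $x\cO_K=x\cO_{K_0}+(x\eta)\cO_{K_0}$. Given $x\eta\in\cO$, the second summand automatically lies in $\cO\cdot\cO_{K_0}\subseteq\cO$, so the only remaining check is $x\cO_{K_0}\subseteq\cO$, equivalently $x\in\cO$.

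The heart of the proof is thus to deduce $x\in\cO$ from $x\eta\in\cO$. I would write $x=a+b\eta$ with $a,b\in\cO_{K_0}$ and expand $x\eta=bc+(a+bd)\eta$ using the monic relation $\eta^2=c+d\eta$ (with $c,d\in\cO_{K_0}$); matching this against $\cO=\cO_{K_0}\oplus m\cO_{K_0}\eta$ yields the divisibility condition $a+bd\in m\cO_{K_0}$. The main obstacle is that this single relation does not at first glance force $b\in m\cO_{K_0}$; the cleanest way to conclude, consistent with the paper's identification of the conductor with the $\cO_{K_0}$-ideal $\fkf=\fkf_\cO\cap\cO_{K_0}$, is to intersect both sides with $\cO_{K_0}$. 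Setting $b=0$ collapses the condition to $a\in m\cO_{K_0}$, so $\fkf_\cO$ and $\fkf_{\eta,\cO}$ correspond to the same principal $\cO_{K_0}$-ideal $m\cO_{K_0}$, which is the content of the lemma and makes it an immediate consequence of Lemma~\ref{Goren}.
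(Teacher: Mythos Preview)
Your proposal is correct and matches the paper, which offers no proof beyond calling the lemma an immediate consequence of Lemma~\ref{Goren}; your use of the decomposition $\cO=\cO_{K_0}\oplus m\cO_{K_0}\eta$ to read off $\fkf_\cO\cap\cO_{K_0}=\fkf_{\eta,\cO}\cap\cO_{K_0}=m\cO_{K_0}$ is exactly that consequence spelled out. Your caution about the literal set equality in $\cO_K$ is well-placed---indeed $x=-\bar\eta=\eta-\Tr_{K/K_0}(\eta)$ satisfies $x\eta=-N_{K/K_0}(\eta)\in\cO_{K_0}\subseteq\cO$ while $x\notin m\cO_K$ whenever $m$ is a non-unit---so the statement should be read, as the paper in fact uses it, at the level of the associated $\cO_{K_0}$-ideals.
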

\medskip
Now let $\cO$ be an order in $K$ with locally maximal real multiplication at $\ell$ (i.e. $(\cO_{K_0})_{\ell}\subset \cO_{\ell}$). Assume that the index of $\cO$ is divisible by a power of $\ell$ and that $\ell$ splits in $\cO_{K_0}$ and let $\ell=\fkl_1\fkl_2$. Then $\cO_{\ell}$ is isomorphic to the localization of a $\cO_{K_0}$-order, whose conductor $\fkf$ has a unique factorization into prime ideals containing $\fkl_1^{e_1}\fkl_2^{e_2}$. Locally at~$\ell$, the lattice of orders of index divisible by~$\ell$ has the form given in Figure~\ref{fig:order-lattice}. This is equivalent to the following statement.


\begin{lemma}\label{lemma:ordlat-2dim}
Let $\cO$ be an order in $K$, with locally maximal real multiplication. The position of~$\cO_{\ell}$ within the lattice of $\cO_{K_0}$-orders localized at $\ell$ is given by the valuations $\nu_{\fkl_i}(\fkf_\cO)$, for $i=1,2$.
\end{lemma}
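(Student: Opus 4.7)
The plan is to read the statement as a direct corollary of Lemma~\ref{Goren} combined with the fact that $\cO_{K_0}$ is a Dedekind domain of class number one.

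First, I would invoke Lemma~\ref{Goren} to identify the set of $\cO_{K_0}$-orders of $K$ with the set of nonzero principal ideals of $\cO_{K_0}$, via the assignment $\cO \mapsto \fkf_\cO\cap \cO_{K_0}$. Because $K_0$ has class number one, every nonzero ideal of $\cO_{K_0}$ is principal, so this identification is really a bijection between $\cO_{K_0}$-orders and nonzero ideals of $\cO_{K_0}$. Under this bijection, inclusion of orders corresponds (in reverse) to divisibility of conductors: $\cO \subseteq \cO'$ if and only if $\fkf_{\cO'}\mid \fkf_\cO$, as one reads off from the explicit form $\cO_{K_0}[m\eta]$ of part (1) of Lemma~\ref{Goren}.

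Next, I would use unique factorization of ideals in the Dedekind domain $\cO_{K_0}$. The ideal $\fkf_\cO\cap\cO_{K_0}$ factors uniquely as a product of prime ideals of $\cO_{K_0}$. Under the hypothesis that $\ell=\fkl_1\fkl_2$ splits, we can write
\begin{equation*}
\fkf_\cO\cap\cO_{K_0}=\fkl_1^{e_1}\fkl_2^{e_2}\,\fka,
\end{equation*}
where $e_i=\nu_{\fkl_i}(\fkf_\cO)$ and $\fka$ is coprime to $\ell$. Localizing at~$\ell$ (i.e., inverting every prime of $\cO_{K_0}$ distinct from $\fkl_1,\fkl_2$) makes the factor $\fka$ trivial, so the class of $\cO$ in the localized lattice is determined precisely by the pair of exponents $(e_1,e_2)$. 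Conversely, for any pair $(e_1,e_2)\in\Z_{\ge 0}^2$ the ideal $\fkl_1^{e_1}\fkl_2^{e_2}$ is principal (class number one), hence arises as the conductor of some $\cO_{K_0}$-order by part~(2) of Lemma~\ref{Goren}, which shows that every point of the $\Z_{\ge 0}^2$ grid is realized.

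There is essentially no obstacle: once the two hypotheses (the splitting of $\ell$ in $\cO_{K_0}$ and the class number one assumption on $K_0$) are combined with the bijection of Lemma~\ref{Goren}, the statement reduces to unique factorization. The only point worth flagging carefully is that the bijection of Lemma~\ref{Goren} is given up to units, which is precisely why passing from the generator $m$ to the ideal $m\cO_{K_0}$ is well-defined; no further subtlety intervenes.
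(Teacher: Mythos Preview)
Your argument is correct and matches the paper's approach: the paper does not give a separate proof of this lemma, but the paragraph immediately preceding it already explains that it follows from Lemma~\ref{Goren} together with unique factorization of the conductor ideal in $\cO_{K_0}$, which is exactly what you spell out. Your write-up is simply a more detailed version of that same reasoning.
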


\begin{figure}
\begin{center}
    \begin{tikzpicture}[
            point/.style={circle, fill=black,
                minimum size=3pt, inner sep=0pt},
            mu1/.style={orange, thick},
            mu2/.style={violet, thick},
        ]
        \node[point] (A00) at (0,0) {};
        \node[point,below  left=20pt of A00] (A01) {};
        \node[point,below right=20pt of A00] (A10) {};
        \node[point,below  left=20pt of A01] (A02) {};
        \node[point,below  left=20pt of A10] (A11) {};
        \node[point,below right=20pt of A10] (A20) {};
        \node[point,below  left=20pt of A02] (A03) {};
        \node[point,below  left=20pt of A11] (A12) {};
        \node[point,below  left=20pt of A20] (A21) {};
        \node[point,below right=20pt of A20] (A30) {};
        \path[use as bounding box] (A03) rectangle (A30 |- A00);
        \node[right=20pt of A30] (right) {};
    \draw[gray,dashed] (A00) -- (A00 -| right) node[right]
    {\normalcolor$\nu_l(\Norm(\fkf_\cO))=0$};
    \draw[gray,dashed] (A10) -- (A10 -| right) node[right]
    {\normalcolor$\nu_l(\Norm(\fkf_\cO))=1$};
    \draw[gray,dashed] (A20) -- (A20 -| right) node[right]
    {\normalcolor$\nu_l(\Norm(\fkf_\cO))=2$};
    \draw[gray,dashed] (A30) -- (A30 -| right) node[right]
    {\normalcolor$\nu_l(\Norm(\fkf_\cO))=3$};
        \draw[mu1,-] (A00) -- node[above left] {$\fkl_1$} (A01) -- (A02);
        \draw[mu1,-] (A10) -- (A11);
        \draw[mu2,-] (A00) -- node[above right] {$\fkl_2$} (A10) -- (A20);
        \draw[mu2,-] (A01) -- (A11);
        \draw[mu1,dashed,-] (A02) -- (A03);
        \draw[mu1,dashed,-] (A11) -- (A12);
        \draw[mu1,dashed,-] (A20) -- (A21);
        \draw[mu2,dashed,-] (A20) -- (A30);
        \draw[mu2,dashed,-] (A11) -- (A21);
        \draw[mu2,dashed,-] (A02) -- (A12);
    \end{tikzpicture}
    \caption{\label{fig:order-lattice}The lattice of orders}
\end{center}
\end{figure}
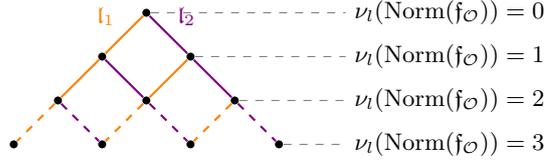

\noindent
We call level in the lattice of orders the set of all
orders having the same $\ell$-adic valuation of the norm of the
conductor. For example, level 2 in Figure~\ref{fig:order-lattice} is
formed by three orders with conductors $\fkl_1^2,\fkl_1\fkl_2$ and
$\fkl_2^2$, respectively. This distribution of orders on levels leads to a classification of isogenies into descending and ascending ones, which is the key point to a DFS algorithm for navigating in the isogeny graph, just like in the elliptic curve case. This will be furthered detailed in Section~\ref{GraphStructure}.



\subsection{The Tate pairing}\label{subsec:TatePairing}

Let $A$ be a polarized abelian surface, defined over a perfect 
field $F$. We denote by $A[m]$ the $m$-torsion subgroup. We denote by
$\mu_m$ the group of $m$-th roots of unity and by 
\begin{eqnarray*}
W_m:A[m]\times \hat{A}[m] &\rightarrow &\mu_m
\end{eqnarray*}
the $m$-Weil pairing on the abelian surface.  

In this paper, we are only interested in the Tate pairing over finite
fields. We give a specialized definition of the pairing in this case,
following~\cite{Schmoyer,Ionica}. More precisely, let $F=\F_q$ and suppose that we have $m\mathbin|\#A(\F_q)$. We denote by $k$ the \textit{embedding degree with respect to} $m$, i.e. the smallest integer $k\geq 0$ such that
$m\mathbin|q^k-1$. Moreover, we assume that $A[m]$ is
defined over $\F_{q^k}$. We define the Tate pairing as
$$T_m(\cdot,\cdot):\left\{
\begin{array}{rcl}
A(\F_{q^k})/mA(\F_{q^k})\times \hat{A}[m](\F_{q^k})&\rightarrow &\mu_m\\
(P,Q) &\mapsto &  W_m(\pi_k(\bar{P})-\bar{P},Q),
\end{array}\right.$$
where $\pi_{k}$ is $k$-th power of the Frobenius
endomorphism of $A$ and $\bar{P}$ is any point such that $m\bar{P}=P$.
Note that since $A[m]\subseteq A(\F_{q^k})$, this definition is
independent of the choice of $\bar{P}$.  Indeed, if $\bar{P}_1$ is a
second point such that $m\bar{P}_1=P$, then $\bar{P}_1=\bar{P}+T$, where
$T$ is a $m$-torsion point, and $\pi_k(\bar{P}_1)-\bar{P}_1=
\pi_k(\bar{P})-\bar{P}$. 

\noindent
For a fixed polarization $\lambda:A\rightarrow \hat{A}$ we
define a pairing on $A$ itself
$$
T_m^{\lambda}(\cdot,\cdot):\left\{
\begin{array}{rcl}
A(\F_{q^k})/mA(\F_{q^k})\times A[m](\F_{q^k})&\rightarrow &\mu_m\\
(P,Q) &\mapsto & t_m(P,\lambda(Q)).
\end{array}\right.$$

If $A$ has a distinguished principal polarization and there is no risk of
confusion, we write simply $T_m(\cdot,\cdot)$ instead of
$T_m^{\lambda}(\cdot,\cdot)$.
\medskip

Lichtenbaum~\cite{Lichtenbaum} describes a version of the Tate pairing on
Jacobian varieties. Since we use Lichtenbaum's formula for computations,
we briefly recall it here. Let $D_1\in A(\F_{q^k})$ and $D_2\in
A[m](\F_{q^k})$ be two divisor classes, represented by two divisors such
that $\mathrm{supp}(D_1)\cap \mathrm{supp}(D_2)=\emptyset$. Since $D_2$
has order $m$, there is a function $f_{m,D_2}$ such that
$\div(f_{m,D_2})=mD_2$. The Lichtenbaum pairing of the divisor classes
$D_1 $ and $D_2$ is computed as
\begin{eqnarray*}
t_m(D_1,D_2)=f_{m,D_2}(D_1).
\end{eqnarray*}
The output of this pairing is defined up to a coset of $(\F_{q^k}^*)^m$.
Given that $\F_{q^k}^*/(\F_{q^k}^*)^m\simeq \mu_m$, we obtain a pairing defined as
\begin{eqnarray*}
T_m(\cdot,\cdot): A(\F_{q^k})/mA(\F_{q^k})\times A[m](\F_{q^k})
    &\rightarrow &\mu_m\\
(P,Q) &\rightarrow & t_m(P,Q)^{(q^k-1)/m}.
\end{eqnarray*}
The function $f_{m,D_2}(D_1)$ is computed using Miller's
algorithm~\cite{Miller} in $O(\log m)$ operations in $\F_{q^k}$.

\section{Isogenies preserving real multiplication}\label{Isogenies}

Let $K$ be a quartic CM field and $\RPhi=(\phi_1,\phi_2)$ be a CM-type.
The notation $\cO^{\dagger}$ denotes the complementary module of an order $\cO$, i.e.
$\cO^{\dagger}=\{\alpha \in K| \Tr_{K/\Q}(\alpha\cO)\subseteq \Z\}$. In this Section all abelian varieties are defined over $\C$, unless specifically stated otherwise. A principally polarized abelian surface over $\C$ with complex multiplication by an order $\cO\subset K$ is of the form $A=\C^2/\RPhi(\fka)$, where $\fka$ is a fractional ideal of $\cO$ and such that
\begin{eqnarray}\label{def:ppavideal}
\xi \fka\bar{\fka}=\cO^{\dagger}, 
\end{eqnarray}
with $\xi$ purely imaginary such that $\phi_i(\xi)$ lies on the positive imaginary axis for $i\in \{1,2\}$. The variety given by $(\fka, \xi)$ is said to be of CM-type $(\cO,\RPhi)$. The imaginary part of any Riemann form on $\C^2/\RPhi(\fka)$ writes as
\begin{eqnarray*}
E_{\xi}(z,w)=\sum _{r=1}^2 \xi^{\phi_r}(x'^{\phi_r}y^{\phi_r}-x^{\phi_r}y'^{\phi_r}),
\end{eqnarray*}
with $z=x+y\tau, w=x'+y'\tau$, where $x,y,x',y'\in \R$.

\noindent
This defines a principal polarization~\cite{Birkenhake} that we denote by $\lambda_{\xi}$. By extension, for a given isogeny $I: (\C^2/\Phi(\fka),\lambda_{\xi})\rightarrow (\C^2/\Phi(\fkb),\lambda_{\xi'})$, we call induced polarization $I^*E_{\xi'}(u,v)=E_{\xi'}(I(u),I(v))$, for all $u,v\in \C^2$. 

\noindent
Recall that we focus on the case where $\cO_{K_0}\subset\cO$.  
\begin{lemma}
Let $K$ be a quartic CM field and $K_0$ its maximal real subfield with class number 1 and let $\delta$ be the generator of $(\cO_{K_0}^{\dagger})^{-1}$ and $\mu$ a generator for the conductor of $\cO$. For every p.p.a.v. of CM-type $(\cO,\RPhi)$ given by $(\fka,\xi)$ there exists $\tau\in K$ such that 
\begin{enumerate}
\item $\fka=\cO_{K_0}+\cO_{K_0}\tau$
\item $\xi=-1/\delta\mu(\tau-\bar{\tau})$ 
\item $(\tau^{\phi_1},\tau^{\phi_2})\in \mathbb{H}_1\times (\C \setminus \R)$, where $\mathbb{H}_1$ is the upper-half plane.
\end{enumerate}
\end{lemma}

\begin{proof}
Since $\cO_{K_0}$ is a Dedekind domain and the ideal $\fka$ is an $\cO_{K_0}$-module, we may then write it as $\fka=\Lambda_1\alpha\xoplus \Lambda_2\beta$, with $\alpha,\beta \in K$, and $\Lambda_{1,2}$ two $\cO_{K_0}$-ideals. Hence we have $A\cong \C^2/\RPhi(\Lambda)$ and $\Lambda=\alpha^{-1}\fka=\Lambda_1\xoplus \Lambda_2\tau$, with $\Lambda_1$ and $\Lambda_2$ lattices in $K_0$ and $\tau=\frac\beta\alpha \in K$. Note that since $K_0$ has class number one, it follows that we can
choose $\Lambda_1=\Lambda_2=\cO_{K_0}$. 

For the proof of 2, we use the computations in~\cite[Prop. 4.2]{Spallek} which shows that
\begin{eqnarray*}
\fka\bar{\fka}=(\frac{\tau-\bar{\tau}}{\eta-\bar{\eta}}),
\end{eqnarray*}
where $\eta$ is the one defined by Equation~\eqref{eta}. 
Note that $\cO^{\dagger}=\{\alpha \in K| \Tr_{K/\Q}(\alpha\cO)\subseteq \Z\}=\{\alpha \in K| \Tr_{K/K_0}(\alpha\cO)\subseteq \cO_{K_0}\}\cO_{K_0}^{\dagger}$. From this and by using~\cite[Example 2.8]{BuchmannLenstra}, we get that $\cO^{\dagger}=\frac{1}{\mu(\eta-\bar{\eta})\delta}\cO$. We conclude that  $\xi=-1/\delta\mu(\tau-\bar{\tau})$ .

To prove 3, we look at the equality obtained in 2. The fact that $(\tau^{\phi_1},\tau^{\phi_2})\in \mathbb{H}_1\times (\C\setminus \R)$ follows from the fact that $\xi^{\phi_i}$, $i=1,2$ is on the positive imaginary axis and that we may assume, without restricting the generality, that $(\delta \mu)^{\phi_1}$ is positive.  
\qed
\end{proof}

The isogenies discussed by the following proposition were brought to our
attention by John Boxall.

\begin{proposition}
\label{prop:john}
    Let $K$ and $K_0$ be as previously stated. Let $\ell$ be a prime, and
    $\fkl\subset\cO_{K_0}$ a prime $\cO_{K_0}$-ideal of norm~$\ell$.  Let
    $A=\C^2/\Phi(\Lambda)$ be an abelian surface over \bC\ with complex
    multiplication by an $\cO_{K_0}$-order $\cO\subset K$, with
    $\Lambda=\Lambda_1+\Lambda_2\tau$. A set of representatives of the
    cyclic subgroups of $(\fkl^{-1}\Lambda)/\Lambda$, and more precisely of
    the isogenies on $A$ having these subgroup as kernels is given by
    $\{I_\infty\}\cup\{I_\rho,\
    \rho\in{\Lambda_1\Lambda_2^{-1}}/{\fkl\Lambda_1\Lambda_2^{-1}}\}$,
    where:
\begin{equation}
\label{John}
    I_\infty:
    \left\{
    \begin{array}{rcl}
        \displaystyle
A&\rightarrow&
        \displaystyle
        \C^2/\RPhi(\fkl^{-1}\Lambda_1\xoplus  \Lambda_2\tau),\\
z &\mapsto & z,
    \end{array}
    \right.
    \quad
    I_\rho:
    \left\{
    \begin{array}{rcl}
        \displaystyle
    A&\rightarrow&
        \displaystyle
        \C^2/\RPhi(\Lambda_1\xoplus 
        \fkl^{-1}\Lambda_2(\tau+\rho)),\\
z &\mapsto & z.
    \end{array}
    \right.
\end{equation}
\end{proposition}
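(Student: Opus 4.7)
The plan is to identify $A[\fkl]$ with a two-dimensional $\F_\ell$-vector space and then present both families in~(\ref{John}) as parameterizations of the $\ell+1$ lines through the origin. Since $\ell$ is coprime to the characteristic, the $\fkl$-torsion of $A$ is $\Phi(\fkl^{-1}\Lambda)/\Phi(\Lambda)$; using the $\cO_{K_0}$-module decomposition $\Lambda=\Lambda_1\oplus\Lambda_2\tau$ together with $\cO_{K_0}/\fkl\simeq\F_\ell$, this group is $\F_\ell\oplus\F_\ell$, with exactly $\ell+1$ cyclic subgroups of order $\ell$. The strategy is to show that the first form in~(\ref{John}) produces one such subgroup, the second form produces the remaining $\ell$, and no coincidences occur.

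A standard fact from the analytic theory is that an isogeny of $A$ with cyclic kernel $H\subset A[\fkl]$ is realized by the identity on $\bC^2$, passing to the quotient by a lattice $\Lambda'$ with $\Lambda\subset\Lambda'\subset\fkl^{-1}\Lambda$ whose image in $\fkl^{-1}\Lambda/\Lambda$ is $H$, and whose kernel is $\Lambda'/\Lambda$. So it suffices to enumerate the intermediate lattices of index $\ell$ over $\Lambda$.

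For the first form, $\Lambda'=\fkl^{-1}\Lambda_1\oplus\Lambda_2\tau$ visibly contains $\Lambda$ with index $\ell$, and its kernel $\fkl^{-1}\Lambda_1/\Lambda_1$ is the ``horizontal'' line in $\F_\ell\oplus\F_\ell$. For the second form, writing $\Lambda'=\Lambda_1+\fkl^{-1}\Lambda_2(\tau+\rho)$, I would first observe that the required inclusion $\Lambda_2\tau\subset\Lambda'$ forces $\Lambda_2\rho\subset\Lambda_1$, equivalently $\rho\in\Lambda_1\Lambda_2^{-1}$; a direct index computation under this condition then gives $[\Lambda':\Lambda]=\ell$, and the kernel is the cyclic group of classes of $\bar b(\tau+\rho)$ for $\bar b\in\fkl^{-1}\Lambda_2/\Lambda_2$, which in $\F_\ell\oplus\F_\ell$ is a line of ``slope $\rho$''.

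The remaining point is to decide when two values $\rho,\rho'$ give the same line; comparing the two kernels reduces this to $(\rho-\rho')\fkl^{-1}\Lambda_2\subset\Lambda_1$, i.e., $\rho-\rho'\in\fkl\Lambda_1\Lambda_2^{-1}$. Hence the effective parameter space is $\Lambda_1\Lambda_2^{-1}/\fkl\Lambda_1\Lambda_2^{-1}$, of cardinality $\ell$, and together with the horizontal kernel this exhausts all $\ell+1$ cyclic subgroups of $A[\fkl]$. I expect the main subtlety to lie in this second family: one has to keep careful track of the condition on $\rho$ and of the slope of the resulting line, using the $\cO_{K_0}$-module structure; once this bookkeeping is in place, a counting argument closes the bijection without further work.
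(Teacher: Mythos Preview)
Your argument is correct and follows the same line as the paper's (very terse) proof: both note that $(\Lambda/\fkl)/\Lambda\simeq(\bZ/\ell\bZ)^2$ and then match the $\ell+1$ cyclic subgroups with the displayed overlattices, the paper simply asserting this last step while you carry it out in full. Your bookkeeping on the second family in fact sharpens the statement: you correctly find that $\rho$ must lie in $\Lambda_1\Lambda_2^{-1}$ and that two choices are equivalent modulo $\fkl\Lambda_1\Lambda_2^{-1}$, whereas the paper's condition ``$\rho\in\fkl\Lambda_1\Lambda_2^{-1}$'', read literally, would collapse all of these kernels to the single line $\fkl^{-1}\Lambda_2\tau$.
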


\begin{proof}
    Our hypotheses imply that $\Lambda$ is an $\cO_{K_0}$-module of rank
    two, from which it follows that $(\fkl^{-1}\Lambda)/\Lambda$ is
    isomorphic to $(\bZ/\ell\bZ)^2$. The $\ell+1$ cyclic subgroups of
    $(\fkl^{-1}\Lambda)/\Lambda$ are the kernels of
    the isogenies given in the Proposition.\qed
\end{proof}

The isogenies given by Equation~\eqref{John} are examples of \fkl-isogenies, that we define as follows.

\begin{definition}
\label{def:lfrak-isog}
Let $\fkl$ be an prime ideal of $\cO_{K_0}$ of norm a prime number $\ell$. Then the $\fkl$-torsion of an abelian variety $A$ defined over a perfect field $F$ with real multiplication by
$\cO_{K_0}$ is given by
\begin{eqnarray*}
A[\fkl]=\{x \in A(\overline{F})~s.t.~\alpha x=0,~\forall \alpha\in \fkl\}.
\end{eqnarray*}
Isogenies with kernel a cyclic subgroup of $A[\fkl]$ of order $\ell$ are
called \emph{\fkl-isogenies}.
\end{definition}
For the commonly encountered case where $\fkl=\alpha\cO_{K_0}$ for some generator $\alpha\in\cO_{K_0}$ (which occurs in our setting since $\cO_{K_0}$ is assumed principal), the notation $A[\fkl]$ above matches with the notation $A[\alpha]$ 
representing the kernel of the endomorphism represented by $\alpha$.
In this situation, the cyclic isogenies introduced in Definition~\ref{def:lfrak-isog} are also called $\alpha$-isogenies.

In the remainder of this paper, we assume that $\fkl$ is a prime ideal of norm $\ell$. 
This implies that $\ell$ is either split or ramified in $K_0$. In this paper, we deliberately chose to focus on the split case. This restriction allows us to further design an algorithm for endomorphism ring computation, as we will explain in Section~\ref{sec:algorithm}.

Given Definition~\ref{def:lfrak-isog}, Proposition~\ref{prop:john} can be
regarded as giving formulae for a set of representatives for isomorphism classes of $\fkl$-isogenies over the complex numbers.

\medskip

The following trivial observation that \fkl-isogenies preserve the
maximal real multiplication follows directly from
$\End(\fkl^{-1}\Lambda_i)=\End(\Lambda_i)$. Later in this article we will show that a converse to this statement also holds: an isogeny which preserves the maximal real multiplication is an $\fkl$-isogeny (Proposition~\ref{allRMIsogenies}).

\begin{proposition}
\label{prop:ell-isog-preserve-rm}
Let $A$ be an abelian surface defined over $\C$ with $\End(A)$ an $\cO_{K_0}$-order. Let $I:A\rightarrow B$ be an \fkl-isogeny. Then $\End(B)$ is also an $\cO_{K_0}$-order.
\end{proposition}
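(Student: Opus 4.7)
The plan is to read off the conclusion directly from the explicit lattice descriptions in Proposition~\ref{prop:john}. Since $B$ is isogenous to $A$ over \bC, its endomorphism algebra is still $K$, so in order to establish that $\End(B)$ is an $\cO_{K_0}$-order it is enough to exhibit an inclusion $\cO_{K_0}\subset\End(B)$. Concretely, this amounts to showing that the target lattice of every \fkl-isogeny is stable under the componentwise action of $\Phi(\cO_{K_0})$ on $\C^2$.

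First I would treat the first type of isogeny, whose target is $\C^2/\Phi(\frac{\Lambda_1}{\fkl}\oplus \Lambda_2\tau)$. Since $\Lambda_1$ is a fractional $\cO_{K_0}$-ideal of $K_0$ and \fkl\ is a prime of $\cO_{K_0}$, the quotient $\frac{\Lambda_1}{\fkl}$ is again a fractional $\cO_{K_0}$-ideal. Likewise $\Lambda_2\tau$ is stable under multiplication by $\cO_{K_0}$, so the entire lattice is. This shows that $\cO_{K_0}$ acts on the target by holomorphic endomorphisms, hence $\cO_{K_0}\subset\End(B)$.

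For the second type of isogeny, with target $\C^2/\Phi(\Lambda_1\oplus\frac{\Lambda_2}{\fkl}(\tau+\rho))$, exactly the same argument applies: $\frac{\Lambda_2}{\fkl}$ is a fractional $\cO_{K_0}$-ideal, and scalar multiplication by $\tau+\rho$ commutes with the $\cO_{K_0}$-action, so the lattice is once again an $\cO_{K_0}$-module.

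There is essentially no obstacle here, which is why the authors describe this as a trivial observation. The one bookkeeping point worth checking is that the embedding of $\cO_{K_0}$ in $\End(B)$ realized in this way is the one induced via $I$ from the CM structure of $A$; but this is automatic because $I$ is given by $z\mapsto z$ in Proposition~\ref{prop:john}, which commutes with multiplication by any element of $\Phi(\cO_{K_0})$.
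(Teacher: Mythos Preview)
Your proposal is correct and follows essentially the same approach as the paper, which condenses the entire argument into the single observation that $\End(\frac{\Lambda_i}{\fkl})=\End(\Lambda_i)$. Your write-up simply unpacks this for each of the two target lattices in Proposition~\ref{prop:john}.
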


The following proposition shows how polarizations can be transported
through $\fkl$-isogenies. We use here the fact that $\cO_{K_0}$ is assumed
to have class number one.

\begin{proposition}
    \label{alphapolarization}
    Let $A  $ be an abelian surface with $\End(A)$ an $\cO_{K_0}$-order.
    Let $I:A  \rightarrow B  $ be an \fkl-isogeny (following the notations
    of Proposition~\ref{prop:john}).
    Let $E_\xi$ define a principal polarization of $A$.
    If $\fkl=(\alpha)$ with $\alpha\in K_0$ totally positive,
            $E_{\alpha\xi}$ defines a principal polarization on $B$.
       Moreover, $I^*E_{\alpha\xi}=\alpha E_\xi$. 
\end{proposition}

\begin{proof}
    We follow notations of Proposition~\ref{prop:john} and take
    $I=I_{\infty}$ as an example (the other cases are similar). We can write
\begin{align*}
    E_{\xi}(x+y\tau, x'+y'\tau)
    &=E_{\alpha\xi}(\frac{x}{\alpha}+y\tau,\frac{x'}{\alpha}+y'\tau).\\
\end{align*}
Hence if $E_{\xi}$ defines a principal polarization on
$\C^2/\RPhi(\Lambda_1\xoplus \Lambda_2\tau)$ and $\alpha$ is totally
positive then $E_{\alpha\xi}$
defines principal polarizations on the variety
    $\C^2/\RPhi(\frac{\Lambda_1}{\alpha}\xoplus \Lambda_2\tau)$ (we just
    showed that the matrices of the corresponding Riemann forms are
    equal).

The fact that $I^*E_{\alpha\xi}=\alpha E_\xi$ follows from the definition
    of $I^*$, and of the Riemann forms $E_\xi$ and $E_{\alpha\xi}$.
\qed    
\end{proof}

\begin{lemma}\label{dualisogeny}
Let $A$ be a principally polarized abelian variety under the assumptions in Proposition~\ref{alphapolarization}.
The dual of an $\alpha$-isogeny starting from $A$ is an $\alpha$-isogeny.
\end{lemma}
\begin{proof}
  This follows trivially from $I^*E_{\alpha\xi}=\alpha E_\xi$, since this implies that
  $\alpha\lambda_{\xi}=\hat{I}\circ\lambda_{\alpha \xi}\circ I$, where $\lambda_{\xi}$ and
  and $\lambda_{\alpha\xi}$ are the isogenies corresponding to polarizations $E_{\xi}$ and
  $E_{\alpha \xi}$. 
\qed  
\end{proof}


\noindent
In the remainder of this paper, we assume that $\alpha$ as in Proposition~\ref{alphapolarization} exists and is totally positive. It becomes clear then that the $\alpha$-isogenies we introduced are edges in the graph given by Definition~\ref{def:isogeny-graph}. By Proposition~\ref{prop:ell-isog-preserve-rm} they are edges in the $\cO_{K_0}$-layer of this graph.

\begin{remark}\label{polarization}
If $\ell$ is a prime number such that $\ell\cO_{K_0}=\fkl_1\fkl_2$, we denote by $\alpha_i$, $i=\{1,2\}$, elements of $\cO_{K_0}$ such that $\fkl_i=\alpha_i\cO_{K_0}$. Let $I: A\rightarrow B$ be an $\fkl_1$-isogeny. Proposition~\ref{alphapolarization} implies that for a given polarization $\xi$ on $A$, $\ell \lambda_{\xi}=\hat{I}\circ (\alpha_2\lambda_{\alpha_1\xi}) \circ I$. 
\end{remark}

\vspace{0.5 cm}
\noindent
Note that if $\ell$ is such that $\ell\cO_{K_0}=\fkl_1\fkl_2$, with
$\fkl_1+\fkl_2=(1)$, then the factorization of $\ell$ yields a symplectic
basis for the $\ell$-torsion. Indeed, we have $J[\ell]=J[\fkl_1]\xoplus
J[\fkl_2]$, and the following proposition establishes the symplectic
property.

\begin{proposition}
\label{prop:weil-isotropic-l1l2}
Let $J$ be a principally polarized abelian surface defined over a
number field $L$. With the notations above, we have
$W_{\ell}(P_1,P_2)=1$ for any $P_1\in J[\fkl_1]$ and $P_2\in J[\fkl_2]$.
\end{proposition}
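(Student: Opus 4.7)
The plan is to exploit the coprimality of $\fkl_1$ and $\fkl_2$ together with the fact that totally real endomorphisms are self-adjoint for the Rosati involution. Concretely, since $\fkl_1+\fkl_2=(1)$ in $\cO_{K_0}$, I would begin by choosing an element $\alpha\in\fkl_1$ with $\alpha\equiv 1\pmod{\fkl_2}$. Because $\cO_{K_0}\subset\End(J)$, this $\alpha$ acts as an honest endomorphism of $J$. By construction it annihilates $J[\fkl_1]$ (so $\alpha P_1=0$) while restricting to the identity on $J[\fkl_2]$ (so $\alpha P_2=P_2$).

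Next I would invoke the standard adjointness property of the Weil pairing: for any $\phi\in\End(J)$ with dual $\hat\phi\in\End(\hat J)$, one has $W_\ell(\phi P,Q)=W_\ell(P,\hat\phi Q)$ on $J[\ell]\times\hat J[\ell]$. Transported through the principal polarization $\lambda:J\to\hat J$ — which matches the convention of writing $W_\ell(\cdot,\cdot)$ in place of $W_\ell(\cdot,\lambda(\cdot))$ adopted in Section~\ref{subsec:TatePairing} — this gives $W_\ell(\phi P,Q)=W_\ell(P,\phi^{\dagger}Q)$, where $\phi^{\dagger}$ denotes the Rosati adjoint of $\phi$. The key structural input here is that on a CM abelian surface the Rosati involution on $\End(J)\otimes\Q\cong K$ coincides with complex conjugation; hence every element of $K_0$, and in particular our $\alpha$, satisfies $\alpha^{\dagger}=\alpha$.

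Combining these two ingredients, the conclusion is essentially automatic:
\begin{eqnarray*}
W_\ell(P_1,P_2)=W_\ell(P_1,\alpha P_2)=W_\ell(\alpha P_1,P_2)=W_\ell(0,P_2)=1.
\end{eqnarray*}
The only real subtlety is the second equality, which rests on the Rosati-invariance of $\alpha$; this is classical (see Shimura--Taniyama or Mumford's \emph{Abelian Varieties}) but deserves to be cited explicitly rather than taken for granted. Everything else — the Chinese-remainder choice of $\alpha$, bilinearity of $W_\ell$, and the fact that $\alpha$ acts correctly on the two $\fkl_i$-torsion components — is formal.
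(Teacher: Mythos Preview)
Your argument is correct and takes a genuinely different route from the paper. The paper proves the statement by a direct complex-analytic computation: it writes $J$ as a torus $\C^2/\Phi(\Lambda_1+\Lambda_2\tau)$, represents $P_i\in J[\alpha_i]$ explicitly as $\frac{1}{\alpha_i}(x_i+x_i'\tau)$ with $x_i\in\Lambda_1$, $x_i'\in\Lambda_2$, and evaluates the Weil pairing via the Riemann form $E_\xi$, obtaining $\exp(-2\pi i\, E_\xi(x_1+x_2\tau,\,y_1+y_2\tau))=1$ because the argument lands in $\Z$. Your proof is instead purely algebraic: a CRT choice of $\alpha\in\fkl_1$ with $\alpha\equiv 1\pmod{\fkl_2}$, together with the Rosati self-adjointness of totally real endomorphisms, lets you slide $\alpha$ across the pairing and kill one slot. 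The paper's computation is shorter given the complex-analytic framework already developed in that section, but it tacitly relies on lifting/reduction to justify the claim over a general field $L$. Your approach works uniformly over any base, involves no coordinate calculation, and makes explicit that the isotropy is forced precisely by the real multiplication being fixed under Rosati.
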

\begin{proof}
    This can be easily checked on the complex torus
    $\C^2/\RPhi(\Lambda_1\xoplus\Lambda_2\tau)$. Let
    $P_1=\frac{x_1}{\alpha_1}+\frac{x_2}{\alpha_1}\tau \in J[\alpha_1]$
    and $P_2=\frac{y_1}{\alpha_2}+\frac{y_2}{\alpha_2}\tau \in
    J[\alpha_2]$, where $x_1,y_1\in \Lambda_1$ and $x_2,y_2\in
    \Lambda_2$.
    Then $W_{\ell}(P_1,P_2)=\exp(-2\pi i
    \ell\frac{E_{\xi}(x_1+x_2\tau,y_1+y_2\tau)}{\ell}))=1$.
\qed    
\end{proof}

The following lemma allows us to count the number of principally polarized abelian varieties with CM by an $\cO_{K_0}$-order $\cO$.. Along the lines of its proof, we also show that the action of Shimura class group of $\cO$ on the set of p.p.a.v. with CM by $\cO$ is simple and transitive.  

\begin{lemma}\label{ShimuraCardinality}
Let $\cO$ be an $\cO_{K_0}$-order in a CM quartic field $K$. The number of isomorphism classes of principally polarized abelian surfaces with CM type $(\cO,\Phi)$ is 
\begin{eqnarray*}
\frac{\#\Cl(\cO)}{\#\Cl^+(\cO_{K_0})}\cdot \#(\cO_{K_0}^{\times})^+/N_{K/K_0}(\cO^{\times}).
\end{eqnarray*}
\end{lemma}
\begin{proof}
Note first that when $\cO$ is an $\cO_{K_0}$-order, then
all principally polarized abelian varieties over $\C$ are of the
form $\C^2/\RPhi(\fka)$, with $\fka$ an invertible ideal of $\cO$. This
follows from Equation~\eqref{def:ppavideal} because $\cO$ is
Gorenstein (see Lemma~\ref{Goren++}), and thus $\cO^\dagger$ is
invertible. Just like in the case of $\cO=\cO_K$ treated by Shimura, it then follows that there is a transitive simple action of the Shimura class group of $\cO$, denoted by $\mathfrak{C}(\cO)$, on the set of principally polarized abelian varieties with CM by $\cO$. The number of p.p.a.v. with CM by $\cO$ is thus $\#\mathfrak{C}(\cO)$. 
Let us now compute the cardinality of $\#\mathfrak{C}(\cO)$. 
For this, we use the following sequence: 
\begin{eqnarray*}
0 \rightarrow (\cO_{K_0}^{\times})^+/N_{K/K_0}(\cO^{\times})\rightarrow \mathfrak{C}(\cO)\rightarrow \Cl(\cO)\xrightarrow{N_{K/K_0}} {\Cl^{+}(K_0)} \rightarrow 0.
\end{eqnarray*}
For $\cO=\cO_K$, the exactness of this sequence is proven in~\cite{Streng10} or~\cite{BroGru}. When $\cO\neq \cO_K$, the proof follows closely the lines of the proof for the maximal order for the exacteness at $(\cO_{K_0}^{\times})^+/N_{K/K_0}(\cO^{\times})$, $\mathfrak{C}(\cO)$ and $\Cl(\cO)$. For the surjectiveness of the norm map, $N_{K/K_0} :\Cl(\cO)\rightarrow \Cl^+(\cO_{K_0})$ we use the fact that it writes as a composition of two surjective maps $\Cl(\cO)\rightarrow \Cl(\cO_K)\rightarrow \Cl^+(\cO_{K_0})$.
\qed
\end{proof}
 
In the remainder of this paper, unless stated otherwise, we consider principally polarized abelian surfaces with complex multiplication by an order $\cO$ which has locally maximal real multiplication at $\fkl$, i.e. $\cO_{K_0,\fkl}\subset \cO_{\fkl}$. In this case, we may extend the notion of $\fkl$-isogeny. Indeed, if $A$ has CM by such an order, then the isogenies with kernel a subgroup of order $\ell$ of $A[\fkl]$ are called $\fkl$-isogenies by extension. 

\begin{lemma}
Let $A$ be an principally polarized abelian variety with locally maximal real multiplication at $\fkl$. Let $I:A\rightarrow B$ be a an $\fkl$-isogeny. Then $B$ is principally polarized and has locally maximal real multiplication at $\fkl$.
\end{lemma}
\begin{proof}
Note first that $\ell\cO_B\subset \cO_A$. Hence, if $f$ is the conductor of the real multiplication order of $A$ and $f'$ is the  real multiplication order of $B$, we have that $f \mid \ell f'$. Since $f$ is prime to $\ell$, it follows that $f\mid f'$.
Then following~\cite[§7.1, Prop. 7]{Shimura}, there are $f\cO_{K}$-transforms $\lambda_{A,f}:A\rightarrow A_{f}$ and $\lambda_{B,f}:B\rightarrow B_{f}$. We know that $A_{f}$ is principally polarized and has RM by $\cO_{K_0}$. Then there is a $\fkl$-isogeny $I'_{\fkl}:A_f\rightarrow B_{f}$ such that the  diagram in Figure~\ref{transforms} is commutative.

\begin{figure}[h!]\caption{}\label{transforms}
\begin{center}
\begin{tikzpicture}
    \node (A) at (0,0){$A$}; 
    \node (B) at (3,0)  {$B$}; 
    \node (Ad) at (0,2) {${A_f}$};
    \node (Bd) at (3,2) {${B_f}$};
    \draw[->] (A) to node[above] {\small{$I$}} (B);
    \draw[<-] (Ad) to node[left] {\small{$\lambda_{A,f}$}} (A);
    \draw[<-] (Bd) to node[right] {\small{$\lambda_{B,f}$}} (B);
    \draw[->] (Ad) to node[above] {\small{$I'$}} (Bd);
\end{tikzpicture}
\end{center}
\end{figure}

Since $I'$ is an $\fkl$-isogeny starting from an p.p.a.v. with RM by $\cO_{K_0}$, then it follows that $B_{\fkl}$ has RM by $\cO_{K_0}$. We conclude that $f\cO_{K_0}\subset \End(B)$, hence $B$ has locally maximal real multiplication at $\fkl$. \qed
\end{proof}

Let $I:A\rightarrow B$ be a separable isogeny between two p.p.a.v. defined over a perfect field. Denote by $\cO_A=\End(A)$ and $\cO_B=\End(B)$ and assume that these orders contain a suborder of $\cO_{K_0}$ which is locally maximal at $\fkl$. If $\cO_A\simeq \cO_B$, we say that the isogeny is \textit{horizontal}. If not, then the localizations of the orders at $\fkl$ lie on consecutive levels of the lattice given by Figure~\ref{fig:order-lattice}. If $(\cO_B)_{\fkl}$ is properly contained in $(\cO_A)_{\fkl}$, we say that the isogeny is \textit{descending}. In the opposite situation, we say the isogeny is \textit{ascending}.

Associated to an ordinary principally polarized abelian surface defined over a perfect field and whose endomorphism ring is an order with locally maximal real multiplication at $\fkl$, we define the $\fkl$-\textit{isogeny graph} to be the graph whose vertices are isomorphism classes of principally polarized abelian surfaces $(A, \iota, \lambda)$  with locally maximal real multiplication at $\fkl$ (following the notation from Section~\ref{GraphDefinition}) and whose edges are equivalence classes of $\fkl$-isogenies between these surfaces. Note that by Lemma~\ref{dualisogeny}, we may identify an abelian variety to its dual and consider this as non-oriented graph. With this terminology, we state our main results regarding the structure of the $\fkl$-isogeny graph for p.p.a.v defined over a number field. 

\begin{proposition}\label{splitting}
  Let $A$ be a principally polarizable abelian surface defined over $\C$, with endomorphism ring an order $\cO$ in a CM quartic field $K$ different from $\Q(\zeta_5)$. Let $\fkl$ be an ideal of prime norm $\ell$ in $\cO_{K_0}$ and assume that $\cO$ is locally maximal at $\fkl$.
    \begin{enumerate}
        \item Assume that $\fkl\cO_K$ is prime with the conductor of
            $\cO$, that we denote by $\fkf$. Then we have:
            \begin{enumerate}
                \item If $\fkl$ splits into two ideals in $\cO_K$, then
                    there are, up to an isomorphism,
                    exactly two horizontal \fkl-isogenies starting from
                    $A$ and all the others are descending.
                \item If $\fkl$ ramifies in $\cO_K$, up
                    to an isomorphism, there is exactly one horizontal
                    \fkl-isogeny starting from $A$ and all the others are
                    descending. 
                \item If $\fkl$ is inert in $K$, all $\ell+1$
                    $\fkl$-isogenies starting from $A$ are descending.
            \end{enumerate}
        \item If $\fkl$ is not coprime to $\fkf$, then up to an
            isomorphism, there is exactly one ascending $\fkl$-isogeny
            and $\ell$ descending ones starting from $A$.  
\end{enumerate}
\end{proposition}

\begin{proof}
(1) We treat first the case where $\cO$ is an $\cO_{K_0}$-order. If $\cO=\cO_K$, then the number of horizontal \fkl-isogenies between p.p.a.v. equals the number of ideal classes in the Shimura class group of $\cO_K$~given by ideals of norm $\ell$ (see~\cite[§7.5, Prop. 23]{Shimura} and~\cite[§14.4, Prop. 7]{Shimura}). 
  Assume now that $\cO$ is an order of conductor $\mathfrak{f}$ prime to $\fkl$ and that there is an horizontal \fkl-isogeny between abelian varieties defined over $\C$ having endomorphism ring $\cO$. Following~\cite[§7.1, Prop. 7]{Shimura}, there are $\mathfrak{f}$-transforms towards $\lambda_{A,\fkl}:A\rightarrow A_{\fkl}$ and $\lambda_{B,\fkl} : B\rightarrow B_{\fkl}$, where $A_{\fkl}$ and $B_{\fkl}$
  have CM by $\cO_K$. Then there is an isogeny $I':A\rightarrow B$ such that the diagram in Figure~\ref{transforms} is commutative:
  
\begin{figure}\caption{}
\begin{center}
\begin{tikzpicture}
    \node (A) at (0,0){$A$}; 
    \node (B) at (3,0)  {$B$}; 
    \node (Ad) at (0,2) {${A_\fkl}$};
    \node (Bd) at (3,2) {${B_\fkl}$};
    \draw[->] (A) to node[above] {\small{$I$}} (B);
    \draw[<-] (Ad) to node[left] {\small{$\lambda_{A,\fkl}$}} (A);
    \draw[<-] (Bd) to node[right] {\small{$\lambda_{B,\fkl}$}} (B);
    \draw[->] (Ad) to node[above] {\small{$I'$}} (Bd);
\end{tikzpicture}
\end{center}
\end{figure}
Then $I'$ is an isogeny corresponding to a projective ideal $\fkl_1$,
lying over $\fkl$ in $\cO_K$. Since $\fkl$ is prime to the
conductor $\mathfrak{f}$, it follows that $I$ corresponds to the
ideal $\fkl_1\cap \cO$ in $\cO$. We conclude that the number of
horizontal $\fkl$-isogenies is $2$ if $\fkl$ is split in $K$, 1 if $\fkl$ is ramified in $K$ and 0 if $\fkl$ is inert.

In order to count descending isogenies, we count the abelian
    surfaces lying at a given level in the graph (up to isomorphism). To
    do this, let $\cO$ be the order of conductor $\fkf$ and assume $\fkf$ is prime to $\fkl$. We use Lemma~\ref{ShimuraCardinality} to compute $\#\mathfrak{C}(\cO)$, and thus the number of p.p.a.v. with CM by $\cO$, in terms of $\#\Cl(\cO)$. 

    To compute $\#\Cl(\cO)$, we will apply class number relations. More precisely, we have the exact sequence:
   
\begin{eqnarray}\label{exactSequence}
 1\rightarrow \cO^{\times}\rightarrow \cO_K^{\times}\rightarrow (\cO_K/\fkf\cO_K)^{\times}/(\cO/\fkf\cO_K)^{\times}\rightarrow \Cl(\cO)\rightarrow \Cl(\cO_K)\rightarrow 1. 
\end{eqnarray}
Hence we have the formula for the class number 
\begin{eqnarray*}
        \#\Cl(\cO)=\frac{\#\Cl(\cO_K)}{[\cO_K^{\times}:\cO^{\times}]}
        \frac{\#(\cO_K/\fkf\cO_K)^{\times}}{\#(\cO/\fkf\cO_K)^{\times}}.
\end{eqnarray*}
We have that $\cO_K^{\times}=\mu_K\cO_{K_0}^{\times}$, where $\mu_K=\{\pm 1\}$ (see~\cite[Lemma II.3.3]{Streng10}). Since $\cO_{K_0}\subset \cO$, it follows that
$[\cO_K^{\times}:\cO^{\times}]=1$.   

We note that $\cO/\fkf\cO_K\simeq \cO_{K_0}/\fkf\cO_{K_0}$. We denote by $N$ the norm of ideals in $\cO_{K}$. Moreover, we have that
    \begin{eqnarray}
        \#(\cO_K/\fkf{O}_K)^{\times}=
        N(\fkf)\prod_{\fkp\mathbin|\fkf }(1-\frac{1}{N(\fkp)}),
    \end{eqnarray}
    where the ideals in the product are all prime ideals of $\cO_K$,
    dividing the conductor. Let $\cO_{\fkl}$ be the $\cO_{K_0}$-order of
    conductor $\fkl \fkf$. By writing the exact
    sequence~\eqref{exactSequence} for the order $\cO_{\fkl}$, we obtain
    that 
\begin{align*}
        \#\Cl(\cO_{\fkl})&=\#\Cl(\cO)\frac{\#(\cO/\fkf\cO_K)^{\times}}
            {\#(\cO_{\fkl}/\fkf\fkl\cO_{K})^{\times}}\cdot \frac{\#(\cO_K/\fkf\fkl\cO_K)^{\times}}{\#(\cO_{K}/\fkf\cO_{K})^{\times}},\\
        &=\#\Cl(\cO)\frac1{\ell-1}N(\fkl)
            {\prod _{\fkp\mathbin|\fkl}(1-\frac{1}{N(\fkp)})},
            \end{align*}  
where we used the fact that $\#(\cO_l/\fkf\fkl\cO_K)^{\times}=(\ell-1)\times \#(\cO/\fkf\cO_K)^{\times}$.

Hence there are $(\ell-1)\cdot \#Cl(\cO)$ p.p.a.v. with CM by $\cO_{\fkl}$ when $\fkl$ is split in $K$, $\ell\cdot \#Cl(\cO)$ when $\fkl$ is ramified and $(\ell+1)\cdot \#Cl(O)$ when $\fkl$ is inert. 

 Moreover, by a simple symmetry argument, the number of descending isogenies
 is the same for every node lying at the $\cO$-level. Indeed,
let $A$ and $B$ be two nodes at the $\cO$-level. Then there is a
projective ideal $\fka$ in $\cO_K$ (which may be taken to be prime with
both $\fkl$ and $\fkf$), giving an horizontal isogeny $\lambda_{\fka}:A\rightarrow B$, corresponding to the ideal $\fka\cap \cO$. Assume that $A$ has a descending $\fkl$-isogeny towards a variety $A_{\fkl}$ lying at the $\cO_{\fkl}$-level $I_{\fkl}:A\rightarrow A_{\fkl}$. Note that there is a variety $B_{\fkl}$ at the $\cO_{\fkl}$-level such that $\lambda'_{\fka}:A_{\fkl}\rightarrow B_{\fkl}$ is the horizontal isogeny corresponding to the ideal $\fka\cap \cO_{\fkl}$. Then there is a $\fkl$-isogeny $I'_{\fkl}:B\rightarrow B_{\fkl}$ such that the following diagram is commutative: 
\begin{center}
\begin{tikzpicture}
    \node (A) at (0,0){$A_\fkl$}; 
    \node (B) at (3,0)  {$B_\fkl$}; 
    \node (Ad) at (0,2) {${A}$};
    \node (Bd) at (3,2) {${B}$};
    \draw[->] (A) to node[above] {\small{$\lambda'_{\fka}$}} (B);
    \draw[->] (Ad) to node[left] {\small{$I_{\fkl}$}} (A);
    \draw[dashed,->] (Bd) to node[right] {\small{$I'_{\fkl}$}} (B);
    \draw[->] (Ad) to node[above] {\small{$\lambda_{\fka}$}} (Bd);
\end{tikzpicture}
\end{center}

By comparing the number of abelian varieties at one level and the one below it and taking into account this symmetry, we conclude that all descending isogenies starting from each a.v. with CM by an order of conductor prime to $\fkl$ reach non-isomorphic nodes. Moreover, each a.v. with CM by the order of conductor $\fkl$ has exactly one ascending isogeny.

  Finally, let $\cO_0$ is an order in $K_0$ which is locally maximal at $\fkl$ and let $\fkf$ be the conductor of this order. Assume that $I:A\rightarrow B$ is an $\fkl$-isogeny and that $A$ and $B$ have real multiplication by $\cO_0$. Then there are $\fkf$-transforms from $A$ and $B$ towards two abelian varieties with maximal real multiplication. Then one obtains an $\fkl$-isogeny similar to the one in Figure~\ref{transforms} and $I$ has the same direction (i.e. horizontal, ascending or descending) as $I'$.

(2) If $\fkl$ divides $\fkf$, we have
    \begin{eqnarray*}
        \#\Cl(\cO_{\fkl})=\#\Cl(\cO)\frac{\#(\cO/\fkf\cO_K)^{\times}}
        {\#(\cO_{\fkl}/\fkf\fkl\cO_{K})^{\times}}.
    \end{eqnarray*}
    By a similar argument to the one above, we conclude that the number of ascending isogenies is 1 and the number of descending isogenies is $\ell$, for all p.p.a.v. with CM by the $\cO_{K_0}$-order of conductor $\fkf$.
\qed    
\end{proof}

\begin{remark}\label{ramifiedCase}
Note that Proposition~\ref{splitting} concerns also the case where $\ell$ is ramified in $\cO_{K_0}$. The structure of the $\fkl$-graph in this case is similar to the one in the split case.
\end{remark}

\begin{remark}
We have excluded the case $K=\Q(\zeta_5)$ because in this case $\cO_K^{\times}=\mu_K\cO_{K_0}$, where $\mu_K$ is the group of roots of unity with order 10. However, a nearly equivalent statement may be given for the structure of the $\fkl$-graph in this case. Only the degrees of vertices having CM by $\cO_K$ are affected. 
\end{remark}  
\noindent
Let $\pi$ be a $q$-Weil number in $K$, with $q$ prime to $\ell$. Suppose that in order to obtain a finite graph, we restrict to considering the subgraph of the $\fkl$-graph whose vertices are p.p.a.v. $A$ with endomorphism ring such that $\Z[\pi,\bar{\pi}]\subset \End(A)$ and whose edges are $\fkl$-isogenies between these abelian varieties. By extension, in the remainder of this paper, we will call this subgraph the $\fkl$-isogeny graph. From Proposition~\ref{splitting}, we deduce that the structure of a connected component of the $\fkl$-isogeny graph is exactly the one of an $\ell$-isogeny graph between elliptic curves, called \emph{volcano}~\cite{Kohel,FouMor}. By extension, in the remainder of this paper, we call this graph the $\fkl$-isogeny graph. Furthermore, we show in the following Section that this graph is isomorphic to a graph whose edges are ordinary p.p.a.v. defined over $\F_q$ and with maximal real multiplication, and whose edges are equivalence classes of rational $\fkl$-isogenies.   

\section{The structure of the real multiplication isogeny graph over
finite fields}\label{GraphStructure}
In this Section, we study the structure of the graph given by rational
isogenies between principally polarizable abelian surfaces defined over a
finite field, such that the corresponding endomorphism rings have locally maximal real multiplication at $\fkl$. The endomorphism ring of a p.p.a.v. $A$
over a finite field $\F_q$ ($q=p^n$) is an order in the quartic CM field
$K$ such that
\begin{eqnarray*}
\Z[\pi,\bar{\pi}]\subset \End(A) \subset \cO_K,
\end{eqnarray*}
where $\Z[\pi,\bar{\pi}]$ denotes the order generated by $\pi$, the
Frobenius endomorphism and by $\bar{\pi}$, the Verschiebung. Moreover,
the assumption that $\End(A)$ is an order with locally maximal real multiplication at $\fkl$ implies that its localization contains $(\cO_{K_0}\Z[\pi,\bar{\pi}])_{\fkl}$.


\subsection{The $\fkl$-isogeny graph}
The notion of $\fkl$-isogeny defined in Definition~\ref{def:lfrak-isog} has
been used so far for abelian surfaces defined over $\bC$. We remark than
whenever an abelian variety defined over a finite field $\F_q$ has
endomorphism ring some order with locally maximal real multiplication at $\fkl$, we may define the notion of $\fkl$-isogeny exactly in the same way.

\begin{proposition}\label{reduction}
Let $\fkl$ be a prime ideal of degree 1 over $\ell$ in $\cO_{K_0}$, with $\ell\not =p$. Let $A$ be a principally polarized ordinary abelian variety defined over a finite field $\F_q$ of characteristic $p$ and having locally maximal real multiplication at $\fkl$. Then a $\fkl$-isogeny $I:A\rightarrow A'$ preserves real multiplication and the target variety $A'$ is principally polarizable. 
\end{proposition}

\begin{proof}
We choose a canonical lift $\tilde{A}$ of $A$ as defined in~\cite{Lubin}. We may assume without loss of generality that this is defined over a number field $L$~\cite{Chai}, such that $A$ is isomorphic to the reduction of $\tilde{A}$ modulo a ideal $\fkP$ lying over $p$ in $L$. We have that $A[\fkl]\simeq \tilde{A}[\fkl]$ and the reductions of $\fkl$-isogenies starting from $\tilde{A}$ give $\ell+1$ (equivalence classes of) isogenies starting from $A$. Hence there is an isogeny $\tilde{I}:\tilde{A}\rightarrow A_1$ such that $A_1$ has good reduction~$\pmod {\fkP}$ and its reduction is isomorphic to $A'$. Since the reduction map $\textrm{End}(A_1)\rightarrow \textrm{End}(A')$ is injective, it follows that $\textrm{End}(A')$ is an order with locally maximal real multiplication at $\fkl$. By reducing polarizations given in Proposition~\ref{alphapolarization} (see~\cite{Deligne} for the reduction of polarizations), we deduce that if $A$ is principally polarized, then $A'$ is also principally polarized.
\qed
\end{proof}

The following result is a generalization of \cite[Theorem 5 (ii) Chapter 13 §2]{LangEllipticFunctions}. The proof follows closely the lines of~\cite{LangEllipticFunctions}, but we reproduce it here for completeness.

\begin{lemma}\label{ConductorPrimes}
Let $A$ be an ordinary abelian variety defined over a finite field $\F_q$ of characteristic $p$. Then the prime $p$ does not divide the conductor of the order $\cO=\End(A)$.
\end{lemma}
\begin{proof}
Denote by $\cO_0$ the real multiplication order of $A$. Assume that $p$ divides the conductor $\fkf$ of $\cO$. Let $\pi$ be the Frobenius endomorphism of $A$. There is an element $\alpha \in \cO_{0}$ such that
\begin{eqnarray*}
\pi=\alpha+c\eta,
\end{eqnarray*} 
with $c\in \fkf$. Then we have
\begin{eqnarray*}
q\delta=\pi\bar{\pi}=\alpha^2\pmod \fkf,
\end{eqnarray*}
for some $\delta \in \cO^{\times}$. This implies that $p$ divides $\alpha$ in $\cO_{0}$. Since $A$ is ordinary and $A[p]\neq 0$, it follows that $\pi$ kills points of order $p$ in $A$. This is a contradiction, because $\pi$ is purely inseparable.  
\qed
\end{proof}

\noindent
The following result is a generalization of~\cite[Theorem 12 (b) Chapter 13 §4]{LangEllipticFunctions}. 

\begin{theorem}\label{CanonicalReduction}
Let $A$ be an abelian variety defined over a number field $L$, with complex multiplication by an order $\cO$. Let $\fkp$ be a prime ideal in $L$ over a prime number $p$, and assume that $A$ has good reduction $A=\bar{A} \pmod \fkp$ and that $\End_L(A)=\End(A)$. Let $\fkf$ be the conductor of $\cO$. Then if $p$ does not divide $\fkf$, the reduction map $\alpha\rightarrow \bar{\alpha}$ is an isomorphism of $\End(A)$ onto $\End(\bar{A})$. 
\end{theorem}

\begin{proof}
Let $\ell$ be a prime number. Let $S_{\ell}$ be the multiplicative monoid of positive integers prime to $\ell$ and let $\cO_{(\ell)}=S_{\ell}^{-1}\cO$ be the localization of $\cO$ at $\ell$.
First, we know from general theory on the reduction of abelian varieties~\cite{Shimura} that the reduction map 
\begin{eqnarray*}
\End(A)\rightarrow \End(\bar{A})
\end{eqnarray*}
is an injection. 
Since we have $T_{\ell}(A)\simeq T_{\ell}(\bar{A})$, for all $\ell \neq p$, it follows that $\End(A)$ and $\End(\bar{A})$ have the same localizations at $\ell$, by~\cite[Ch.13 \S 3 Lemma 1]{LangEllipticFunctions} (whose generalization to abelian varieties is straightforward). On the other hand, because $p$ does not divide the conductor $\fkf$, we have that $\cO_{K,(p)}\simeq \cO_{(p)}$, which means that $\cO_{(p)}$ is integrally closed. It follows that it will coincide with the localization at $p$ of $\End(\bar{A})$. We conclude that $\End(A)\simeq \End(\bar{A})$ because they have the same localization at all primes. 
\qed
\end{proof}

\vspace{0.5 cm}

With this in hand, we show that there is a graph isomorphism between an $\fkl$-isogeny graph between p.p.a.v. defined over finite fields and a certain graph whose vertices are p.p.a.v defined over a number field.  

\begin{corollary}\label{GraphIsomorphism}
Let $\mathcal G$ be an $\fkl$-isogeny graph with vertices principally polarized abelian surfaces defined over $\F_q$ and whose endomorphism ring is an order in $K$, with locally maximal real multiplication at $\fkl$. Let $\pi$ be a $q$-Weil number, giving the Frobenius endomorphism for any of the abelian surfaces in $\mathcal G$. Then $\mathcal{G}$ is isomorphic to a graph $\mathcal G'$, whose vertices are isomorphism classes of principally polarized abelian surfaces defined over a number field $L$ and having CM by an order whose localization at $\fkl$ contains $(\cO_{K_0}[\pi,\bar{\pi}])_{\fkl}$, and whose edges are equivalence classes of $\fkl$-isogenies between these surfaces.    
\end{corollary}

\begin{proof}
We choose $A_1$ and $A_2$ two abelian surfaces corresponding to vertices in the graph $\mathcal{G}$ connected by an edge $I:A_1\rightarrow A_2$ of kernel $G$ a cyclic group in $A_1[\fkl]$. We take $\tilde{A}_1$ the canonical lift of $A_1$. As explained in the proof of Lemma~\ref{reduction}, we may assume that there is a number field $L$ and a prime $\fkp$ lying over $p$ in $L$ such that $A_1$ is isomorphic to $\tilde{A}_1 \pmod \fkp$ and that $\End(\tilde{A}_1)$ is defined over $L$. We denote by $\mathcal G'$ the $\fkl$-isogeny graph whose vertices are p.p.a.v defined over $L$ whose endomorphism ring localized at $\fkl$ contains $(\cO_{K_0}[\pi,\bar{\pi}])_{\fkl}$. We will show that the graph $\mathcal{G}$ is isomorphic to $\mathcal{G}'$. There are $\ell+1$ cyclic groups in $\tilde{A}_1[\fkl]$ and we denote by $\tilde{G}$ the one such that the reduction of points gives an isomorphism $\tilde{G}\simeq G$. We consider the $\fkl$-isogeny of kernel $\tilde{G}$, $I':\tilde{A}_1\rightarrow A_2'$. Then $A_2'\pmod \fkp=A_2$ and we know that $\End(A_2')\rightarrow \End(A_2)$ is an injection. Since $p$ does not divide the conductor of $\End(\tilde{A}_1)$ (because by Lemma~\ref{ConductorPrimes} it does not divide the conductor of $A_1$), it follows $p$ cannot divide the conductor of $A'_2$. Hence by Theorem~\ref{CanonicalReduction} it follows that $A_2'$ is isomorphic to the canonical lift of $A_2$.  
\qed
\end{proof}

\bigskip

We are now interested in determining the field of definition of
$\fkl$-isogenies starting from a p.p.a.v $A$. For that, we need several definitions. 

\phantomsection
Let $\fkl$ be an ideal in $\cO_{K_0}$ and $\alpha$ a generator of this
ideal. Let $\cO$ be an order of $K$ with locally maximal real multiplication at $\fkl$ and let $\theta\in \cO$. We define
the $\fkl$-adic exponent of $\theta$ in $\cO$ with respect to $\cO_{K_0}$ as
$$\label{fkl-adic-valuation}\nu_{\fkl,\cO}(\theta)\coloneq\max _{m\geq
0}\{m:\theta_{\fkl} \in \cO_{K_0,\fkl}+\fkl^m\cO_{\fkl}\},$$ 
where $\theta_{\fkl}$ is the image of $\theta$ via the homomorphism $\cO\rightarrow \cO_{\fkl}$. 
Recall that for a p.p.a.v $A$ with maximal real multiplication, we are 
interested (by Lemma~\ref{lemma:ordlat-2dim}) in computing the $\fkl$-adic valuation of the conductor of the endomorphism ring $\cO_A$. We remark that it suffices to determine $\nu_{\fkl,\cO_A}(\pi)$. Indeed, we have
$\cO_{A,\fkl}=\cO_{K_0,\fkl}+\fkf_{\eta,\cO_A}\eta_{\fkl}$ and 
\begin{eqnarray}\label{conductorValuation}
\nu_{\fkl}(\fkf_{\eta,\cO_A})=
    \nu_{\fkl,\cO_K}(\pi)-\nu_{\fkl,\cO_A}(\pi).
\end{eqnarray}
In the remainder of this paper, we denote by $\nu_{\fkl,A}(\pi)\coloneq
\nu_{\fkl,\cO_A}(\pi)$.

\begin{example}\label{ex:smallGraph}
    Let $H$ be the genus-2 curve given by the equation
    $$y^2=31x^6+79x^5+109x^4+130x^3+62x^2+164x+56$$ defined over
    $\F_{211}$. The Jacobian $J$ has complex multiplication by a quartic
    CM field $K$ with defining equation $X^4+81X^2+1181$. The real
    subfield is $K_0=\Q(\sqrt{1837})$, and has class number~1.  The
    endomorphism ring of $J$ contains the real maximal order $\cO_{K_0}$.
    In the real subfield $K_0$, we have $3=\alpha_1\alpha_2$, with
    $\alpha_1=\frac{43+\sqrt{1837}}{2}$ and $\alpha_2$ its conjugate.
   We have that $\nu_{(\alpha_i),\cO_K}(\pi)=1$, for $i=1,2$,
    where $\pi$ has relative norm $211$ in $\cO_K$. 
\end{example}

Since $\fkl$ is principal in the real multiplication order of $A$, it follows that $A[\fkl]$ is the kernel of an endomorphism.
Since $A$ is ordinary, all endomorphisms are $\F_q$-rational.
Consequently, we have that $\pi(A[\fkl^n])\subset A[\fkl^n]$, for $n\geq 0$. The following result relates the computation of the $\fkl$-adic exponent of $\pi$ to that of the matrix of the Frobenius on the $\fkl$-torsion.  

\begin{proposition}\label{rationality}
 Let $A$ be a p.p.a.v. defined over a finite field $\F_q$ and having CM by an order with locally maximal real multiplication at $\fkl$. Then the largest integer $n$ such that the Frobenius matrix on $A[\fkl^n]$ is of the form
    \begin{eqnarray}\label{FrobeniusMatrix}
        \left (\begin{array}{cc}
            \lambda & 0\\
            0 & \lambda\\
        \end{array}
        \right )\bmod \ell^n
    \end{eqnarray}
    is $\nu_{\fkl,A}(\pi)$.
\end{proposition}
\begin{proof}
    First we assume that $\nu_{\fkl,A}(\pi)=n$ and we show
    that the matrix of the Frobenius has the form given by
    equation~\eqref{FrobeniusMatrix}. Let $D$ be an element of $A[\fkl^n]$. Then $\pi$ acts on $D$ as an element of $\cO_{K_0}/\fkl^n\simeq \Z/\ell^n\Z$.
    Hence $\pi(D)=\lambda D$ for some $\lambda \in \Z$. 
    
Conversely, suppose that the matrix of the Frobenius on $A[\fkl^n]$ is of the form~\eqref{FrobeniusMatrix} and take $\alpha$ a real multiplication endomorphism such that $\alpha(D)=\lambda D$, for all $D\in A[\fkl^n]$ (Since any real multiplication endomorphism acts on $A[\fkl^n]$ as $\lambda I_2$, it is easy to see that such an $\alpha$ exists). Then $\pi-\alpha$ is zero on $A[\fkl^n]$, which implies that this is an element of $\fkl^n\cO$ (by~\cite[Proposition 7]{EisLau}).

\qed
\end{proof}

\begin{remark}\label{firstRemark} Let $\F_{q^k}$ be the smallest field extension such that $A[\fkl^n]$ is defined over $\F_{q^k}$. A natural consequence of
Proposition~\ref{rationality} is that the cyclic subgroups of
$A[\fkl^n]$ are rational (i.e. stable under the action of $Gal(\F_{q^k}/\F_{q})$) if and only if $\nu_{\fkl,A}(\pi)\geq n$. In particular, the $\ell+1$ isogenies whose kernels are cyclic subgroups of $A[\fkl]$ are rational
if and only if $\nu_{\fkl,A}(\pi)>0$.
\end{remark}

By Proposition~\ref{splitting} and Corollary~\ref{GraphIsomorphism} we get the following structure of connected components of the non-oriented $\fkl$-isogeny graph over finite fields. 
   
\begin{enumerate}
    \item At each level, if $\nu_{\fkl,A}(\pi)>0$, there are
        $\ell+1$ rational isogenies with kernel a cyclic subgroup of
        $A[\fkl]$.
    \item If $\fkl$ is split in $\cO_{K_0}$ then there are two
        horizontal $\fkl$-isogenies at all levels such that the
        corresponding order is locally maximal at $\fkl$ (i.e. $\cO_{\fkl}\simeq \cO_{K,\fkl}$). At every intermediary level (i.e. $\nu_{\fkl,A}(\pi)>0$), there are $\ell+1$ rational $\fkl$-isogenies: an ascending one and $\ell$ descending ones.
    \item If $\nu_{\fkl,A}(\pi)=0$, then no smaller order
        (whose conductor has larger $\fkl$-valuation) contains
        $\pi$. There are no rational descending
        $\fkl$-isogeny, and there is exactly one ascending
        $\fkl$-isogeny.
\end{enumerate}

We will show that all rational isogenies of degree $\ell$ preserving locally maximal real multiplication at $\fkl$ are $\fkl$-isogenies, for some ideal $\fkl$ of degree 1. 

\begin{lemma}\label{cyclicValuation}
    Let $A$ and $B$ be two abelian varieties defined and isogenous over
    $\F_q$ and denote by $\cO_A$ and $\cO_B$ the corresponding
    endomorphism rings. Let $\fkl$ be an ideal of norm $\ell$ in
    $\cO_{K_0}$. Assume that the $\fkl$-adic valuations of the conductors
    of $\cO_A$ and $\cO_B$ are different. Then for any isogeny
    $I:A\rightarrow B$ defined over $\F_q$ we have $\Ker I\cap
    A[\fkl]\neq \{0\}$.
\end{lemma}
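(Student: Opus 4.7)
The plan is to prove the contrapositive: if $\Ker I\cap A[\fkl]=\{0\}$, then the $\fkl$-adic valuations of the conductors of $\cO_A$ and $\cO_B$ coincide. By relation~(\ref{conductorValuation}), this reduces to proving $\nu_{\fkl,A}(\pi-\bar\pi)=\nu_{\fkl,B}(\pi-\bar\pi)$, and Proposition~\ref{rationality} translates the latter equality into a statement about the Frobenius action on the $\fkl^n$-torsion subgroups. So the strategy is to use $I$ to transport the Frobenius structure on $A[\fkl^n]$ to that on $B[\fkl^n]$.

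First I would upgrade the assumption $\Ker I\cap A[\fkl]=\{0\}$ to $\Ker I\cap A[\fkl^m]=\{0\}$ for every $m\ge 1$. This is a short induction: if $\alpha$ generates $\fkl$ and $P\in\Ker I\cap A[\fkl^m]$ is nonzero, then $\alpha^{m-1}P$ lies in $\Ker I\cap A[\fkl]$, so it vanishes, meaning $P\in A[\fkl^{m-1}]$, and we iterate. Next I would observe that $I$ is equivariant for the $\cO_{K_0}$-action: since $\cO_{K_0}\subset\cO_A$ and $\cO_{K_0}\subset\cO_B$ are identified under the canonical isomorphism $\End^{0}(A)\simeq\End^{0}(B)\simeq K$ induced by $I$, multiplication by $\alpha\in\cO_{K_0}$ on $A$ is carried by $I$ to multiplication by $\alpha$ on $B$. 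Hence $I(A[\fkl^m])\subset B[\fkl^m]$, and together with the injectivity on $A[\fkl^m]$ and the fact that $|A[\fkl^m]|=|B[\fkl^m]|=\ell^{2m}$, we conclude that $I$ restricts to an isomorphism $A[\fkl^m]\xrightarrow{\sim}B[\fkl^m]$.

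Since $I$ is defined over $\F_q$, this isomorphism also commutes with the Frobenius $\pi$. Consequently the Frobenius matrices on $A[\fkl^n]$ and $B[\fkl^n]$ are conjugate (through the basis transport provided by $I$). In particular, the Frobenius is scalar modulo $\ell^n$ on $A[\fkl^n]$ if and only if it is scalar modulo $\ell^n$ on $B[\fkl^n]$. Applying Proposition~\ref{rationality} to both sides yields $\nu_{\fkl,A}(\pi-\bar\pi)=\nu_{\fkl,B}(\pi-\bar\pi)$, and then~(\ref{conductorValuation}) forces $\nu_\fkl(\fkf_{\cO_A})=\nu_\fkl(\fkf_{\cO_B})$, contradicting the hypothesis.

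The main subtlety is the $\cO_{K_0}$-equivariance of $I$: one has to be explicit that the two embeddings of $\cO_{K_0}$ in $\End(A)$ and $\End(B)$ are the \emph{same} one under the natural identification induced by $I$, so that the cardinality count $I(A[\fkl^m])=B[\fkl^m]$ is legitimate. Once this is laid out cleanly, the rest is a bookkeeping argument combining Proposition~\ref{rationality} with the conductor formula~(\ref{conductorValuation}).
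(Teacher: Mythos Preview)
Your argument is correct and follows essentially the same route as the paper's proof: both prove the contrapositive by showing that if $\Ker I\cap A[\fkl]=\{0\}$ then $I$ restricts to an isomorphism $A[\fkl^n]\to B[\fkl^n]$ compatible with Frobenius, whence $\nu_{\fkl,A}(\pi-\bar\pi)=\nu_{\fkl,B}(\pi-\bar\pi)$ and equation~(\ref{conductorValuation}) finishes. Your version is in fact more explicit than the paper's, which asserts $I(A[\fkl^n])=B[\fkl^n]$ and the equality of valuations without spelling out the induction on $m$, the $\cO_{K_0}$-equivariance, or the appeal to Proposition~\ref{rationality}.
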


\begin{proof}
We prove the contrapositive statement. Assume that there is an
isogeny $I:A\rightarrow B$ defined over $\F_q$ with $\Ker I\cap
A[\fkl]= \{0\}$.  We then have that $I(A[\fkl^n])= B[\fkl^n]$, for
all $n\geq 1$. Since $\pi_B\circ I=I\circ \pi_A$, it follows that the
$\fkl$-adic exponents $\nu_{\fkl,\cO_A}(\pi_A)$ and $\nu_{\fkl,\cO_B}(\pi_B)$ are equal. By equation~\eqref{conductorValuation}, it follows that the $\fkl$-adic valuations of the conductors of endomorphism rings of $A$ and $B$ are equal.\qed
\end{proof}

The converse of Lemma~\ref{cyclicValuation} does not hold, as it is
possible for an \fkl-isogeny to have a kernel within $A[\fkl]$, and yet
leave the \fkl-valuation of the conductor of the endomorphism ring
unchanged. The following statement is a converse to Proposition~\ref{prop:ell-isog-preserve-rm}.

\begin{proposition}\label{allRMIsogenies}
    Let $\ell$ be an odd prime number, split in $K_0$. All cyclic
    isogenies of degree~$\ell$ between principally polarizable abelian varieties defined over $\F_q$ having locally maximal real multiplication at $\ell$ are $\fkl$-isogenies, for some degree 1 ideal $\fkl$ in $\cO_{K_0}$.
\end{proposition}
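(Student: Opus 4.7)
The strategy is to reduce matters to showing that $\Ker(I)$ is stable under the $\cO_{K_0}$-action on $A$, and then to classify such stable cyclic subgroups. By Proposition~\ref{GraphIsomorphism}, we may work in the complex setting and write $A=\C^2/\Phi(\Lambda_A)$, $B=\C^2/\Phi(\Lambda_B)$, both endowed with given real multiplication embeddings $\iota_A\colon\cO_{K_0}\hookrightarrow\End(A)$ and $\iota_B\colon\cO_{K_0}\hookrightarrow\End(B)$. The aim is then to show $\Ker(I)\subset A[\fkl_i]$ for one of $i=1,2$, after which Proposition~\ref{prop:john} immediately identifies $I$ with an $\fkl_i$-isogeny.

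The central step is to establish the $\cO_{K_0}$-stability of $\Ker(I)$. Consider the $\Q$-algebra isomorphism $I_\ast\colon\End^0(A)\to\End^0(B)$ defined by $\varphi\mapsto I\circ\varphi\circ I^{-1}$, where $I^{-1}\in\mathrm{Hom}^0(B,A)$ is the quasi-inverse in the isogeny category. Both $\End^0(A)$ and $\End^0(B)$ are isomorphic to $K$, and $I_\ast$ sends the totally real quadratic subfield $K_0\subset\End^0(A)$ to such a subfield of $\End^0(B)$. Because $K$ is a primitive quartic CM field, $K_0$ is the unique totally real quadratic subfield of $K$, hence $I_\ast(K_0)=K_0$ and $I_\ast(\cO_{K_0})=\cO_{K_0}$ as subrings of $K=\End^0(B)$. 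By hypothesis this subring lies inside $\End(B)$, so for every $\alpha\in\cO_{K_0}$ the element $\beta_\alpha:=I_\ast(\iota_A(\alpha))$ belongs to $\End(B)$. The identity $I\circ\iota_A(\alpha)=\beta_\alpha\circ I$ in $\mathrm{Hom}(A,B)$ then yields $\iota_A(\alpha)(\Ker I)\subset\Ker I$.

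Next, I would use that $A[\ell]=A[\fkl_1]\oplus A[\fkl_2]$ as $\cO_{K_0}$-modules, each summand being a two-dimensional $\F_\ell$-vector space on which $\cO_{K_0}$ acts through the quotient $\cO_{K_0}/\fkl_i\simeq\F_\ell$, i.e.\ by scalars. Writing a generator $P$ of $\Ker(I)$ as $P=P_1+P_2$ with $P_i\in A[\fkl_i]$, pick via the Chinese Remainder Theorem $e_i\in\cO_{K_0}$ with $e_i\equiv 1\pmod{\fkl_i}$ and $e_i\equiv 0\pmod{\fkl_{3-i}}$. Stability then gives $e_iP=P_i\in\Ker(I)$. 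Since $A[\fkl_1]\cap A[\fkl_2]=\{0\}$, a cyclic subgroup of order $\ell$ cannot contain two non-zero points from these distinct summands, so one of $P_1,P_2$ must vanish; hence $\Ker(I)\subset A[\fkl_i]$ for some $i$, and Proposition~\ref{prop:john} concludes.

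The main obstacle is the equivariance step. A priori the embedding of $\cO_{K_0}$ into $\End(B)$ transported via $I$ and the given one could differ by a non-trivial automorphism of $K_0$, and one might even worry that $I_\ast(K_0)$ is some other quadratic subfield of $K$; it is precisely the primitivity of $K$ that collapses this ambiguity, ensuring $I_\ast(\cO_{K_0})=\cO_{K_0}$ and that the stability argument proceeds cleanly. The remainder is routine module theory over the split quotient $\cO_{K_0}/\ell\simeq\F_\ell\times\F_\ell$.
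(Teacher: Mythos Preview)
Your proof is correct and takes a genuinely different route from the paper's. The paper argues by cases on the relative position of $\cO_A$ and $\cO_B$ in the lattice of $\cO_{K_0}$-orders (Figure~\ref{fig:order-lattice}): when the orders lie on consecutive levels it invokes Lemma~\ref{cyclicValuation}, a Frobenius-based criterion, to force $\Ker I\cap A[\fkl]\neq\emptyset$ and hence $\Ker I\subset A[\fkl]$; when $\cO_A\simeq\cO_B$ it appeals to the class group action to identify the isogeny with an invertible ideal $\fku$ satisfying $\fku\bar\fku=\fkl$; and it rules out the remaining same-level case by showing the kernel would have to meet both $A[\fkl_1]$ and $A[\fkl_2]$, contradicting cyclicity. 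Your argument bypasses this trichotomy entirely: you show directly that $\Ker I$ is $\cO_{K_0}$-stable by transporting the real multiplication through $I_\ast$ and using that the image $I_\ast(\cO_{K_0})=\cO_{K_0}$ lands in $\End(B)$ by hypothesis, then finish with the idempotent decomposition over $\cO_{K_0}/\ell\simeq\F_\ell\times\F_\ell$. This is more elementary and uniform, requiring neither the Frobenius machinery nor the class group. On the other hand, the paper's case analysis is not wasted effort: it feeds directly into the ascending/descending/horizontal classification that follows, which your argument does not by itself supply. One minor remark: the uniqueness of $K_0$ as the maximal totally real subfield holds for any quartic CM field and is preserved by every $\Q$-automorphism of $K$, so primitivity is not strictly needed for the step $I_\ast(K_0)=K_0$; what primitivity (together with simplicity) guarantees is that $\End^0(A)$ is the field $K$ rather than a matrix algebra.
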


\begin{proof}
    Let $\ell \cO_{K_0}=\fkl_1\fkl_2$. Let $I:A\rightarrow B$ be a
    rational degree-$\ell$ isogeny which preserves the real
    multiplication $\cO_{K_0}$. The endomorphism rings $\cO_A$ and
    $\cO_B$ are orders whose localizations are located in the lattice of orders described by Figure~\ref{fig:order-lattice}. First, by~\cite[Section 8]{BroGru},
    we have that either $\ell \cO_A\subset \cO_B$, and $\ell \cO_B\subset \cO_A$. Hence the two orders lie either on the same level, either on  consecutive levels in the lattice of orders. If $\cO_A$ and $\cO_B$ lie on consecutive levels, then there is an ideal $\fkl$ of norm  $\ell$ in $\cO_{K_0}$ such that the $\fkl$-adic valuation of the  conductors is different. By Lemma~\ref{cyclicValuation}, it follows that the kernel of any cyclic $\ell$-isogeny between $A$ and $B$ is a
 cyclic subgroup of $A[\fkl]$.

    Assume now that $\cO_A$ and $\cO_B$ are $\O_{K_0}$-orders and that they lie at the same level in the lattice of orders. Then by using the Shimura class group action, an horizontal isogeny between $A$ and $B$ corresponds to an invertible ideal $\fku$ of $\cO_A$. Moreover we have $\fku{\bar{\fku}}=\fkl$, with \fkl\ an ideal of norm $\ell$ in $\cO_{K_0}$. Hence it is an $\fkl$-isogeny, for some ideal $\fkl$. Secondly, if $\cO_A$ and $\cO_B$ contain a suborder of $\cO_{K_0}$ of conductor $f$ prime to $\ell$, then we consider $f\cO_K$-transforms towards abelian varieties with RM by $\cO_{K_0}$ and reduce the problem to the first case.

 Finally, if the two orders lie at the same level and are not isomorphic, then
 both the $\fkl_1$-adic and $\fkl_2$-adic valuations of the corresponding conductors are different. It then follows that the kernel of any isogeny from $A$ to $B$ contains a subgroup of $A[\fkl_1]$ and one of $A[\fkl_2]$. This is not possible if the isogeny is cyclic. \qed
\end{proof}

\subsection{The $\ell$-isogeny graph}

Associated to an ordinary principally polarizable abelian surface defined over $\F_q$ and having locally maximal real multiplication at $\ell$, we define the $\{\fkl_1,\fkl_2\}$-\textit{isogeny graph} to be the labeled graph whose edges are all equivalence classes of $\fkl_1$- or $\fkl_2$-isogenies, and whose vertices are isomorphism classes of principally polarizable abelian surfaces over $\F_q$ reached (transitively) by such isogenies. An edge is labeled as $\fkl_1$ or $\fkl_2$, if it corresponds to a $\fkl_1$-isogeny or to a $\fkl_2$-isogeny, respectively.

A natural consequence of Proposition~\ref{allRMIsogenies} is that over
finite fields, the $\{\fkl_1,\fkl_2\}$-isogeny graph is the graph of all
isogenies of degree $\ell$ between principally polarizable abelian surfaces
having locally maximal real multiplication at $\ell$. 

Note that the $\{\fkl_1,\fkl_2\}$-isogeny graph can be seen, by the results above, as the union of two graphs which share all their characteristics with genus one isogeny volcanoes. In particular, the generalization of the top rim of the volcano turns into a torus if both $\fkl_1$ and $\fkl_2$ split. If only one of them splits, the top rim is a circle, and if both are inert we have a single vertex corresponding to a maximal endomorphism ring
(since all cyclic isogenies departing from that abelian variety increase
both the $\fkl_1$- and the $\fkl_2$-valuation of the conductor of the
endomorphism ring).




\subsubsection*{MAGMA experiments.}

Let $A$ be a p.p.a.v. defined over $\F_q$ with maximal real
multiplication at $\ell$. We do not have formulas for computing cyclic isogenies
over finite fields (Section~\ref{sec:algorithm} works around this
difficulty for the computation of endomorphism rings). Instead, we
experiment over the complex numbers, and use the graph isomorphism between the $\fkl$-isogeny graph having $A$ as a vertex and the graph of its canonical lift.

To draw the graph corresponding to Example~\ref{ex:smallGraph}, it is
straightforward to compute the period matrix $\Omega$ associated to a
complex analytic torus $\C^2/\Lambda_1+\tau\Lambda_2$, and compute a
representative in the fundamental domain for the action of $\Sp_4$ using
Gottschling's reduction algorithm\footnote{By Gottschling's reduction
algorithm, we refer to the reduction algorithm as stated
in e.g.~\cite[chap. 6]{Dupont06} or~\cite[\S6.3]{Streng14}, and which relies crucially on Gottschling's work~\cite{Gottschling59}
for defining the 19 matrices which come into play}.

All this can be done symbolically, as the matrix $\Omega$ is defined over
the reflex field $K^r$. As a consequence, we may compute isogenies of
type~\eqref{John} and follow the edges of the graph of isogenies between
complex abelian surfaces having complex multiplication by an order $\cO$
containing $\cO_{K_0}[\pi,\bar{\pi}]$. The exploration terminates when
outgoing edges from each node have been visited. This yields
Figure~\ref{fig:SmallGraph}. Violet and orange edges in
Figure~\ref{fig:SmallGraph}  are $\alpha_1$ and $\alpha_2$-isogenies,
respectively. Note that since $\alpha_1$ and $\alpha_2$ are totally
positive, all varieties in the graph are principally polarized.



\begin{figure}
    \begin{center}
\includegraphics[scale=0.45,angle=90]{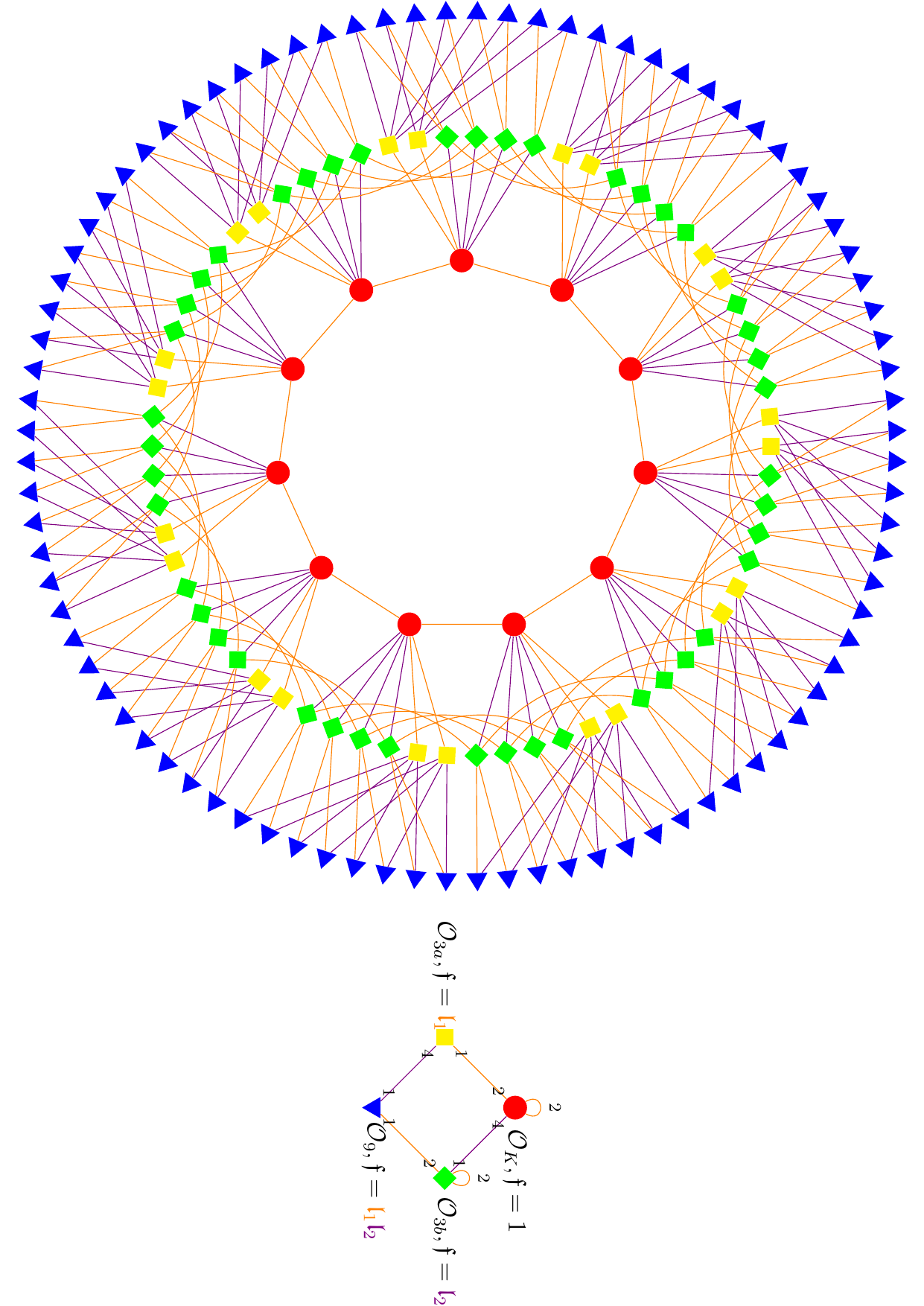}
\caption{\label{fig:SmallGraph}Graph of $\ell$-isogenies preserving real
        multiplication, for $\ell=3$, $K$ defined by
        $\alpha^4+81\alpha^2+1181$, and $\cO_{K_0}[\pi]$
        defined by the Weil number $\pi=\frac12(\alpha^2 + 3\alpha +
        45)$, with $p=\Norm_{K/K_0}\pi=211$.}
    \end{center}
\end{figure}

\vspace{0.5 cm}
In a computational perspective, we are interested in
$\ell$-isogenies, which are accessible to computation using the
algorithms developed by~\cite{CosRob}. Our description of the $\fkl_1$-
and $\fkl_2$-isogenies is key to understanding the
$\ell$-isogenies due to the following result.

\begin{proposition}\label{LLisogenies}
Let $\ell \geq 3$ be a prime number such that $\ell\cO_{K_0}=\fkl_1\fkl_2$. Then all $\ell$-isogenies between p.p.a.v. defined over $\F_q$ and having locally maximal real multiplication at $\ell$ are a composition of an $\fkl_1$-isogeny with an $\fkl_2$-isogeny.
\end{proposition}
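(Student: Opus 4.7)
The plan is to analyze the kernel $K = \ker I$ of an $(\ell,\ell)$-isogeny $I\colon A \to B$ preserving the maximal real multiplication, by combining the $\cO_{K_0}$-module structure on $A[\ell]$ with the Weil-isotropy of $K$.

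First, since both $A$ and $B$ have maximal real multiplication and $I$ intertwines the two $\cO_{K_0}$-actions, the kernel $K$ is an $\cO_{K_0}$-submodule of $A[\ell]$. The coprime factorization $\ell\cO_{K_0} = \fkl_1\fkl_2$ with $\fkl_1 + \fkl_2 = \cO_{K_0}$ gives, by the Chinese Remainder Theorem applied to the $\cO_{K_0}$-module $A[\ell]$, a decomposition $A[\ell] = A[\fkl_1] \oplus A[\fkl_2]$ in which each summand is a two-dimensional $\F_\ell$-vector space. Consequently $K = K_1 \oplus K_2$ with $K_i \subset A[\fkl_i]$, and since $K \simeq (\Z/\ell\Z)^2$ we have $|K_1|\cdot|K_2| = \ell^2$, leaving three possibilities $(|K_1|,|K_2|) \in \{(1,\ell^2),(\ell,\ell),(\ell^2,1)\}$.

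Next, I would rule out the two extreme cases using Weil-isotropy. As the kernel of an $(\ell,\ell)$-isogeny, $K$ is a maximal isotropic subgroup of $A[\ell]$ under the $\ell$-Weil pairing. I claim neither $A[\fkl_1]$ nor $A[\fkl_2]$ is itself Weil-isotropic: if $A[\fkl_1]$ were isotropic, then its orthogonal complement in $A[\ell]$ would contain $A[\fkl_1]$ by hypothesis and $A[\fkl_2]$ by Proposition~\ref{prop:weil-isotropic-l1l2}, hence would equal all of $A[\ell]$, forcing $A[\fkl_1]$ into the radical and contradicting non-degeneracy of the Weil pairing. The argument is symmetric for $A[\fkl_2]$, so both extreme cases are excluded.

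It remains that $|K_1| = |K_2| = \ell$ and each $K_i$ is a cyclic subgroup of $A[\fkl_i]$. Then $I$ factors canonically as $A \xrightarrow{I_1} A/K_1 \xrightarrow{I_2} A/K = B$. The first map $I_1$ has cyclic kernel $K_1 \subset A[\fkl_1]$ and is therefore an $\fkl_1$-isogeny in the sense of Proposition~\ref{prop:john}; by Proposition~\ref{prop:ell-isog-preserve-rm}, $A/K_1$ still has maximal real multiplication by $\cO_{K_0}$. The second map $I_2$ has kernel the image of $K_2$ under $I_1$, which, since $\cO_{K_0}$ acts on $K_2$ through $\cO_{K_0}/\fkl_2$, is a cyclic subgroup of $(A/K_1)[\fkl_2]$ of order $\ell$, so $I_2$ is an $\fkl_2$-isogeny. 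The main subtlety to check is that the principal polarization transports consistently across the factorization, which follows from Proposition~\ref{polarization} and the polarization transport rules discussed immediately after Proposition~\ref{prop:john}; the hypothesis $\ell > 3$ is presumably required only to appeal to the $(\ell,\ell)$-isogeny machinery of \cite{CosRob} and does not enter the module-theoretic core of the argument above.
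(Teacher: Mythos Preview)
Your argument is correct and takes a genuinely different, cleaner route than the paper's proof.

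The paper proceeds by a case split on whether $\cO_A\simeq\cO_B$. In the equal-order case it invokes the Shimura class group action to identify the isogeny with an ideal $\fka$ satisfying $\fka\bar\fka=\ell\cO_A$, and reads off the $\fkl_1$/$\fkl_2$ decomposition from the ideal factorization. In the unequal-order case it uses Proposition~\ref{rationality} to locate an element $D_1\in A[\fkl_1]\cap\Ker I$ via a rationality argument on $A[\fkl_1^n]$, factors through $A/\langle D_1\rangle$, and then uses Weil-isotropy together with Proposition~\ref{prop:weil-isotropic-l1l2} to arrange a second generator $D_2\in A[\fkl_2]$.

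Your approach short-circuits both cases by observing at the outset that $\Ker I$ is an $\cO_{K_0}$-submodule of $A[\ell]$, whence the CRT decomposition $\Ker I=K_1\oplus K_2$ with $K_i\subset A[\fkl_i]$ is immediate, and a single Weil-pairing nondegeneracy argument rules out $K_i=A[\fkl_i]$. This is more uniform and avoids both the class group machinery and the rationality computations. The one point you pass over lightly is \emph{why} $\Ker I$ is $\cO_{K_0}$-stable: in the finite-field setting this follows because any rational isogeny commutes with $\pi$ and $\bar\pi$, and the standing hypothesis $\cO_{K_0}\subset\bZ[\pi,\bar\pi]$ then gives $\cO_{K_0}$-equivariance of $I$. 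Making that explicit would close the argument fully. Your remark about $\ell>3$ is reasonable; nothing in your module-theoretic core uses it.
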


\begin{proof}
    Let $A$ and $B$ be two p.p.a.v. defined over $\F_q$ and let $I:A\rightarrow B$ be an $\ell$-isogeny preserving the real multiplication order, which is locally maximal at $\ell$. We denote by $\cO_A=\End(A)$ and $\cO_B=\End(B)$. If the endomorphism rings are both isomorphic to an order $\cO_{K_0}$-order denoted by $\cO$, then the isogeny corresponds, under the action of the Shimura class group $\fkC(\cO)$, to an ideal class $\fka$ such that $\fka\bar{\fka}=\ell \cO$. It follows that both $\fkl_1$ and $\fkl_2$ split in $\cO$. Let $\fkl_{i,j}$, $i,j\in \{1,2\}$, be such that $\fkl_{i,1}\fkl_{i,2}=\fkl_i$. Then, we may assume that the isogeny $I$ corresponds to the ideal $\fkl_{1,1}\fkl_{2,1}$ under the action of the Shimura class group. We conclude that $I$ is a composition of an $\fkl_1$-isogeny with an $\fkl_2$-isogeny. \\
If $\cO_A$ and $\cO_B$ are isomorphic to an order in $K$ which contains a suborder of $\cO_{K_0}$ of conductor $f$ prime to $\ell$, then the result follows by choosing $f\cO_K$-transforms and reducing the problem to finding a horizontal isogeny between two p.p.a.v. with CM by $\cO_K$, as in the proof of Proposition~\ref{splitting}.\\
Assume now that $\cO_A$ and $\cO_B$ are not isomorphic. This implies that $\nu_{\fkl,\cO_A}(\pi)$ and $\nu_{\fkl,\cO_B}(\pi)$ differ for some $\fkl$, and we may without loss of generality assume $\fkl=\fkl_1$. By considering the dual isogeny $\hat I$ instead of $I$, we may also assume $\nu_{\fkl_1,\cO_A}(\pi)>\nu_{\fkl_1,\cO_B}(\pi)$.\\
Let $n=\nu_{\fkl_1,\cO_A}(\pi)$. We then have that any cyclic subgroup of $A[\fkl_1^n]$ is rational. By Proposition~\ref{rationality}, there is a cyclic subgroup of $B[\fkl_1^{n}]$ which is not rational. Since $I(A[\fkl_1^n])\subset B[\fkl_1^n]$ and the isogeny $I$ is rational, it follows that $\Ker I$ contains an element $D_1\in A[\fkl_1]$. Let $I_1:A\rightarrow C$ be the isogeny whose kernel is generated by $D_1$. This isogeny preserves the real multiplication and is an $\fkl_1$-isogeny (Proposition~\ref{allRMIsogenies}). By~\cite[Prop 7]{EisLau}, there is an isogeny $I_2:C\rightarrow B$ such that $I=I_2\circ I_1$. Obviously, $I_2$ also preserves real multiplication.\\
Let now $\langle D_1,D_2\rangle=\Ker I$. Since $\Ker I\subset A[\fkl_1]+A[\fkl_2]$, we may write $D_2=D_{2,1}+D_{2,2}$ with $D_{2,i}\in A[\fkl_i]$. As $\Ker I$ is Weil-isotropic, we may choose $D_2$ so that $D_{2,1}=0$, whence $D_2\in A[\fkl_2]$. We have $I_1(D_2)\not=0$, so that $I_2$ is an $\fkl_2$-isogeny.
 Note that given the $D_2\in A[\fkl_2]$ which we have just defined, we may also consider the $\fkl_2$-isogeny $I'_2:A\rightarrow C'$ with kernel $\langle D_2\rangle$, and similarly define the $\fkl_1$-isogeny $I'_1$ which is such that $I=I'_1\circ I'_2$.

\qed
\end{proof}

The proposition above leads us to consider properties of
$\ell$-isogenies with regard to the $\fkl_i$-isogenies they are
composed of. Let $I=I_1\circ I_2$ be an $\ell$-isogeny, with $I_i$
an $\fkl_i$-isogeny (for $i=1,2$). We say that $I$ is $\fkl_1$-ascending
(respectively $\fkl_1$-horizontal, $\fkl_1$-descending) if the
$\fkl_1$-isogeny $I_1$ is ascending (respectively horizontal,
descending). This is well-defined, since by Lemma~\ref{cyclicValuation}
there is no interaction of $I_1$ with the $\fkl_2$-valuation of the
conductor of the endomorphism ring.

Proposition~\ref{LLisogenies} is a way to interpret Figure~\ref{llgraph}
as derived from Figure~\ref{fig:SmallGraph} as follows. Vertices are
kept, and we use as edges all compositions of one $\fkl_1$-isogeny and
one $\fkl_2$-isogeny. This fact will serve as a basis for our algorithms
for computing endomorphism rings, detailed in
Section~\ref{sec:algorithm}.

\section{Pairings on the real multiplication isogeny
graph}\label{PairingTheGraph}

Let $\ell\cO_{K_0}=\fkl_1\fkl_2$. In this Section, $\fkl$ denotes any of the ideals $\fkl_1, \fkl_2$. Let $A$ be a p.p.a.v. defined over $\F_q$, with complex multiplication by an order which is locally maximal at $\fkl$.

We relate some properties of the Tate pairing to the isomorphism class of
the endomorphism ring of the abelian variety, by giving a similar result to the
one of~\cite{IonJou2} for genus-1 isogeny graphs. More precisely, we
show that the nondegeneracy of the Tate pairing restricted to the kernel
of an $\fkl$-isogeny determines the type of the isogeny in the graph, at
least when $\nu_{\fkl}(\pi)$ is below some bound. This result
is then exploited to efficiently navigate in isogeny graphs. 

Let $r$ be the smallest integer such that $A[\fkl]\subset A(\F_{q^r})$.
Let $n$ be the largest integer such that $A[\fkl^n]\subset A[\F_{q^r}]$.
We define $k_{\fkl,A}$ to be
\begin{eqnarray*}
    k_{\fkl,A}=\max_{P\in A[\fkl^n]}
        \{k\mid T_{\ell^n}(P,P) \in \mu_{\ell^k}\backslash \mu_{\ell^{k-1}}\}.
\end{eqnarray*}

\begin{definition}
    Let $G$ be a cyclic group of $A[\fkl^n]$. We say that the Tate
    pairing is $k_{\fkl,A}$-non-degenerate (or simply non-degenerate) on
    $G\times G$ if its restriction
    \begin{eqnarray*}
        T_{\ell^n}:G\times G\rightarrow \mu_{\ell^{k_{\fkl,A}}}
    \end{eqnarray*}
    is surjective. Otherwise, we say that the Tate pairing is
    $k_{\fkl,A}$-degenerate (or simply degenerate) on $G\times G$.
\end{definition}

 The following result shows that computing the $\fkl$-adic valuation
of $\pi$ is equivalent to computing $k_{\fkl,A}$. 

\begin{proposition}\label{Equality}
    Let $r$ be the smallest integer such that $A[\fkl]\subset
    A(\F_{q^r})$. Let $n$ be the largest integer such that
    $A[\fkl^n]\subset A[\F_{q^r}]$. Then if $\nu_{\fkl,A}(\pi^r)<2n$, we have
    \begin{eqnarray*}\label{equality}
        k_{\fkl,A}=2n-\nu_{\fkl,A}(\pi^r).
    \end{eqnarray*}
\end{proposition}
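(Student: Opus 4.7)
My plan is to unwind the definition of the Tate pairing, reduce to a Weil-pairing computation involving $\pi^r-\bar\pi^r$ acting on $J[\fkl^n]$, and then read off its $\ell$-adic order from the $\fkl$-adic content of the matrix representing this operator.

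First, I would use the definition $T_{\ell^n}(P,P)=W^{\lambda}_{\ell^n}((\pi^r-1)\bar P,\,P)$ for any preimage $\bar P$ with $\ell^n\bar P=P$. Because $J[\fkl^n]\subset J(\F_{q^r})$, the endomorphism $\pi^r-1$ kills $J[\fkl^n]=J[\alpha^n]$ (with $\alpha\in\cO_{K_0}$ a generator of $\fkl$), hence factors as $\pi^r-1=\alpha^n\psi$ for some $\psi\in\cO_J$; the maximality of $n$ forces $\nu_{\fkl,J}(\psi)=0$. Non-degeneracy of the Weil pairing on $J[\fkl^n]$ (which follows from Proposition~\ref{prop:weil-isotropic-l1l2}) implies $\ell^n\mid q^r-1$, so $\bar\pi^r=q^r\pi^{-r}$ also acts as the identity on $J[\fkl^n]$, giving $\bar\pi^r-1=\alpha^n\bar\psi$ where the coefficient is the Rosati-conjugate of $\psi$ (since $\alpha$ is real). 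Subtracting yields $\pi^r-\bar\pi^r=\alpha^n(\psi-\bar\psi)$, and hence $\nu_{\fkl,J}(\psi-\bar\psi)=v-n$ for $v=\nu_{\fkl,J}(\pi^r-\bar\pi^r)$.

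Next I would eliminate the preimage. Decomposing $J[\ell^{2n}]=J[\fkl^{2n}]\oplus J[\fkl'^{2n}]$ (writing $\fkl'$ for the other prime of $\cO_{K_0}$ above $\ell$) and modifying $\bar P$ by an element of $J[\ell^n]$, one can arrange $\bar P\in J[\fkl^{2n}]$ without loss of generality; the component along $J[\fkl'^{2n}]$ would vanish in the pairing by Proposition~\ref{prop:weil-isotropic-l1l2}. Then $\alpha^n\bar P=b^{-1}P$ with $b=\beta^n\bmod\fkl^n\in(\Z/\ell^n)^\times$ ($\beta$ a generator of $\fkl'$), so $T_{\ell^n}(P,P)=W^{\lambda}_{\ell^n}(\psi P,P)^{b^{-1}}$. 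Rosati duality $W^{\lambda}(\phi X,Y)=W^{\lambda}(X,\bar\phi Y)$ combined with the alternating identity $W^{\lambda}(P,P)=1$ yields $W^{\lambda}(\bar\psi P,P)=W^{\lambda}(\psi P,P)^{-1}$, and hence $W^{\lambda}(\psi P,P)^2=W^{\lambda}((\psi-\bar\psi)P,P)$. Because $\ell$ is odd and $b$ is a unit, the order of $T_{\ell^n}(P,P)$ in $\mu_{\ell^n}$ coincides with that of $W^{\lambda}((\psi-\bar\psi)P,P)$, and $k_{\fkl,J}$ is the maximum of this latter order over $P\in J[\fkl^n]$.

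Finally, I would pick a symplectic basis $(P_1,P_2)$ of $J[\fkl^n]$ with $W^{\lambda}(P_1,P_2)=\zeta$ a primitive $\ell^n$-th root of unity (such a basis exists by non-degeneracy of $W^{\lambda}$ on $J[\fkl^n]$, a consequence of Proposition~\ref{prop:weil-isotropic-l1l2}). Writing $M=(m_{ij})$ for the matrix of $\psi-\bar\psi$ in this basis and $P=aP_1+bP_2$, a direct bilinearity computation gives $W^{\lambda}((\psi-\bar\psi)P,P)=\zeta^{-m_{21}a^2+(m_{11}-m_{22})ab+m_{12}b^2}$. Since $\psi-\bar\psi$ is Rosati-anti-invariant, its trace on $J[\fkl^n]$ vanishes, so $m_{22}=-m_{11}$. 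The maximum order of this exponent as $(a,b)$ ranges over $(\Z/\ell^n)^2$ is $\ell^{n-g}$, where $g=\min(\nu_\ell m_{11},\nu_\ell m_{12},\nu_\ell m_{21})$ is the $\ell$-adic content of $M$. The main obstacle will be the identification $g=v-n$: the hypothesis $v<2n$ guarantees $v-n<n$, so the $\fkl$-adic valuation of $\psi-\bar\psi$ is not truncated upon reduction modulo $\ell^n$, but matching the content of $M$ to the intrinsic valuation $\nu_{\fkl,J}(\psi-\bar\psi)$ in $\cO_J$ requires a local saturation argument for the image of $\cO_J\otimes\Z_\ell$ in $M_2(\Z_\ell)$ acting on the Tate module $T_\fkl J$. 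Granting this, $g=v-n$ and therefore $k_{\fkl,J}=n-g=2n-v$, as claimed.
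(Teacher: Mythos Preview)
Your argument is correct and takes a genuinely different route from the paper's. The paper works directly on $J[\fkl^{2n}]$: it picks a basis $Q_1,Q_2$, writes the matrix $(a_{ij})$ of $\pi^r$, expresses $T_{\ell^n}(\ell^nQ_i,\ell^nQ_j)$ as powers of $W_{\ell^{2n}}(Q_1,Q_2)$, and reads off that $\ell^{2n-k_{\fkl,J}}$ divides $a_{12}$, $a_{21}$ and $a_{11}-a_{22}$; it then appeals to Proposition~\ref{rationality} to convert this into a statement about $\nu_{\fkl,J}(\pi^r-\bar\pi^r)$, and finishes with a short converse. There is no factoring through $\alpha^n$, no explicit Rosati step, and no trace-zero argument. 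Your approach instead factors $\pi^r-1=\alpha^n\psi$, descends to $J[\fkl^n]$, and uses the Rosati involution to replace $\psi$ by the anti-selfadjoint element $\psi-\bar\psi$, whose matrix on a symplectic basis has vanishing trace. This is more structural and explains transparently \emph{why} only the three quantities $m_{12}$, $m_{21}$, $m_{11}-m_{22}$ appear, at the cost of the extra bookkeeping (choice of $\bar P\in J[\fkl^{2n}]$, the unit $b$, and the squaring via Rosati duality).

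One remark: the ``saturation'' step you flag as the main obstacle is in fact immediate here and does not require any delicate Tate-module argument. Since $\fkl=(\alpha)$ is principal in $\cO_{K_0}\subset\cO_J$, one has for any $\theta\in\cO_J$ and any $m\le n$ the equivalences
\[
\theta\in\fkl^m\cO_J\ \Longleftrightarrow\ \theta=\alpha^m\theta'\text{ for some }\theta'\in\cO_J\ \Longleftrightarrow\ \theta(J[\fkl^m])=0\ \Longleftrightarrow\ M\equiv 0\pmod{\ell^m},
\]
the middle equivalence being the standard factorisation of isogenies through a kernel, and the last following from $J[\fkl^m]=\ell^{n-m}J[\fkl^n]$. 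Hence the $\ell$-adic content $g$ of $M$ equals $\nu_{\fkl,J}(\psi-\bar\psi)=v-n$ exactly, using only that $v-n<n$. This closes the gap without any appeal to local saturation of $\cO_J\otimes\Z_\ell$ in $M_2(\Z_\ell)$.
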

\begin{proof}
    Let $Q_1,Q_2$ form a basis for $A[\fkl^{2n}]$. Then $\pi^r(Q_i)=\sum
    a_{ij}Q_{j}$, for $i,j={1,2}$. We have
\begin{eqnarray*}
    T_{\ell^n}(\ell^nQ_i,\ell^nQ_i)=
        W_{\ell^{2n}}(\pi(Q_i)-Q_i, Q_i)=W_{\ell^{2n}}(Q_k,Q_{i})^{a_{ik}}\in \mu_{\ell^{k_{\fkl,A}}},
\end{eqnarray*}
    with $k\equiv i+1 \pmod 2$. By the non-degeneracy of the Weil
    pairing, this implies $a_{12}\equiv a_{21}\equiv 0\pmod
    {\ell^{2n-k_{\fkl,A}}}$. Moreover, the antisymmetry condition on the
    Weil pairing says that
\begin{eqnarray*}
T_{\ell^n}(\ell^nQ_1,\ell^nQ_2)T_{\ell^n}(\ell^nQ_2,\ell^nQ_1)\in
\mu_{\ell^{k_{\fkl,A}}}.
\end{eqnarray*}
    Since $T_{\ell^n}(\ell^n Q_i,\ell^n
    Q_j)=W_{\ell^{2n}}(Q_i,Q_j)^{a_{jj}-1}$, for $i\neq j$, we have that
    \begin{eqnarray*}
        W_{\ell^{2n}}(Q_1,Q_2)^{a_{11}-1}W_{\ell^{2n}}(Q_2,Q_1)^{a_{22}-1}=
        W_{\ell^{2n}}(Q_1,Q_2)^{a_{11}-a_{22}}\in \mu_{\ell^{k_{\fkl,A}}}.
    \end{eqnarray*}
    We conclude that $\ell^{2n-k_{\fkl,A}}$ divides all of $a_{12}$,
    $a_{21}$, and $a_{11}-a_{22}$. By Proposition~\ref{rationality},
    this implies that $2n-k_{\fkl,A}\leq\nu_{\fkl,A}(\pi^r)$.
    
    Conversely, let $k=2n-\nu_{\fkl,A}(\pi^r)$. We know (by
    Proposition~\ref{rationality}) that $\pi=\lambda I_2+\ell^{2n-k}A$,
    for $A\in M_2(\Z)$ and for some $\lambda$ coprime to $\ell$.  Then
    for $P\in A[\fkl^n]$ and $\bar{P}$ such that $\ell^n\bar{P}=P$, we
    have $T_{\ell^n}(P,P)=W_{\ell^{2n}}(\bar{P},\lambda \bar
    {P}+A(\ell^{2n-k} \bar{P}))\in \mu_{\ell^{k}}$. Hence $k\geq
    k_{\fkl,A}$ and this concludes the proof. \qed
\end{proof}

From this proposition, it follows that if
$\nu_{\fkl,A}(\pi)>2n$, the self-pairings of all kernels of
$\fkl$-isogenies are degenerate. At a certain level in the $\fkl$-isogeny graph,
when $\nu_{\fkl,A}(\pi)<2n$, there is at least one kernel with
non-degenerate pairing (i.e. $k_{\fkl,A}=1$). Following the terminology
of~\cite{IonJou}, we call this level \textit{the second stability level}.
As we descend to the floor, $k_{\fkl,A}$ increases. The \textit{first
stability level} is the level at which $k_{\fkl,A}$ equals $n$. 

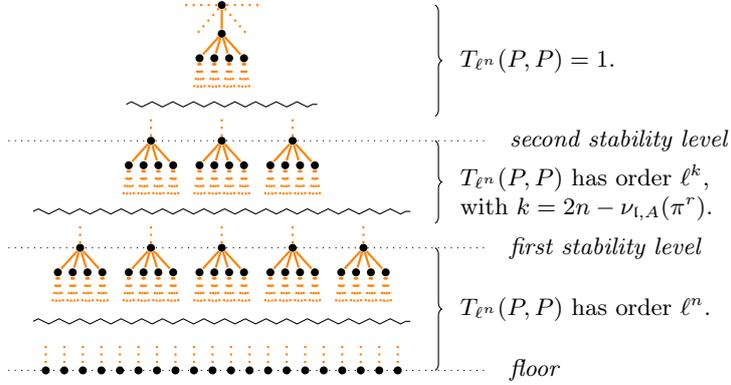
\begin{figure}
\begin{center}
    \begin{tikzpicture}[
            point/.style={circle, black, fill, minimum size=3pt, inner sep=0pt},
            every node/.style={point},
            text node/.style={rectangle, draw=none, fill=none, minimum size=0pt, inner sep=0pt},
            mu1/.style={orange, thick},
            every child/.style={mu1},
            mu2/.style={violet, thick},
            cutline/.style={decorate,decoration={name=zigzag,amplitude=0.25ex,segment
            length=2ex}},
            level 1/.style={sibling distance=0.6em, level distance=1em},
            level 2/.style={dotted,sibling distance=0.15em, level distance=1.25em},
        ]

        \node (A) {}
                child { node {} child child child child }
                child { node {} child child child child }
                child { node {} child child child child }
                child { node {} child child child child }
        ;
        \node[above=0.8em of A] (R) {};
        \draw[mu1] (R)--(A);
        \draw[mu1,dotted]
            (R) -- ++(1.5em,0)
            (R) -- ++(-1.5em,0)
            (R) -- ($(A) + (1em,0)$)
            (R) -- ($(A) + (-1em,0)$)
        ;

        \node[below=4em of A] (B) {}
                child { node {} child child child child }
                child { node {} child child child child }
                child { node {} child child child child }
                child { node {} child child child child }
            ;
        \draw[mu1,dotted] (B) -- ++(0,1em);

        \node[right=2.5em of B] (B1) {}
                child { node {} child child child child }
                child { node {} child child child child }
                child { node {} child child child child }
                child { node {} child child child child }
            ;
        \draw[mu1,dotted] (B1) -- ++(0,1em);

        \node[left=2.5em of B] (B2) {}
                child { node {} child child child child }
                child { node {} child child child child }
                child { node {} child child child child }
                child { node {} child child child child }
            ;
        \draw[mu1,dotted] (B2) -- ++(0,1em);
            
        \coordinate (AB) at ($(A)!.66!(B)$) ;
        \draw[cutline] ($(AB-|B2)+(-1em,0)$) --
        ($(AB -| B1)+(1em,0)$);

        \node[below=4em of B] (C) {}
                child { node {} child child child child }
                child { node {} child child child child }
                child { node {} child child child child }
                child { node {} child child child child }
            ;
        \draw[mu1,dotted] (C) -- ++(0,1em);

        \node[right=2.5em of C] (C1) {}
                child { node {} child child child child }
                child { node {} child child child child }
                child { node {} child child child child }
                child { node {} child child child child }
            ;
        \draw[mu1,dotted] (C1) -- ++(0,1em);
            
        \node[right=2.5em of C1] (C3) {}
                child { node {} child child child child }
                child { node {} child child child child }
                child { node {} child child child child }
                child { node (Cright) {} child child child child }
            ;
        \draw[mu1,dotted] (C3) -- ++(0,1em);

        \node[left=2.5em of C] (C2) {}
                child { node {} child child child child }
                child { node {} child child child child }
                child { node {} child child child child }
                child { node {} child child child child }
            ;
        \draw[mu1,dotted] (C2) -- ++(0,1em);

        \node[left=2.5em of C2] (C4) {}
                child { node (Cleft) {} child child child child }
                child { node {} child child child child }
                child { node {} child child child child }
                child { node {} child child child child }
            ;
        \draw[mu1,dotted] (C4) -- ++(0,1em);
            
        \coordinate (BC) at ($(B)!.66!(C)$) ;
        \draw[cutline] ($(BC-|Cleft)+(-1em,0)$) --
        ($(BC -| Cright)+(1em,0)$);

        \coordinate (righttree) at ($(Cright) + (2em,0)$);
        \coordinate (lefttree) at ($(Cleft) + (-2em,0)$);
        \coordinate (Cmid) at ($(Cleft)!0.5!(Cright)$);

        \draw[decorate,decoration=brace] (righttree |- R) 
        --
         node[text node, right, xshift=1em] {
                 $T_{\ell^n}(P,P)=1$.
         }
         ($(righttree |- B)+(0,1em)$)
         ;
         \draw[dotted] (lefttree |- B) -- (B2) -- (B) -- (B1) --
         (righttree |- B) -- ++(2em, 0) node[text node, right] {\quad \itshape second stability level};

         \draw[decorate,decoration=brace] (righttree |- B)
        --
         node[text node, right, xshift=1em,yshift=-1ex] {
             \parbox{12em}{
             $T_{\ell^n}(P,P)$ has order $\ell^k$,\\
             with $k=2n-\nu_{\fkl,A}(\pi^r)$.
             }
         }
         ($(righttree |- C)+(0,1em)$)
         ;
         \draw[dotted] (lefttree |- C) --
         (C -| righttree) -- ++(2em,0) node[text node, right] {\quad \itshape first stability level};

         \coordinate[below=4em of Cmid] (D);
        \coordinate (CD) at ($(C)!.6!(D)$) ;
        \draw[cutline] ($(CD-|Cleft)+(-1em,0)$) --
        ($(CD -| Cright)+(1em,0)$);

         \foreach\x in {-9.5,-8.5,...,9.5}
         \draw[mu1,dotted] (.75*\x em, 0 |- D) node {} -- ++(0,1em);

         \draw[decorate,decoration=brace] (righttree |- C)
        --
         node[text node, right, xshift=1em] {
                 $T_{\ell^n}(P,P)$ has order $\ell^n$.
         }
         (righttree |- D)
         ;
         \draw[dotted] (lefttree |- D) -- 
         (righttree |- D) -- ++(2em,0) node[text node, right] {\quad \itshape floor};
    \end{tikzpicture}
    \caption{\label{fig:stability}Stability levels in the $\fkl$-graph}
\end{center}
\end{figure}

We now show that from a computational point of view, we can use the Tate
pairing to orient ourselves in the $\fkl$-isogeny graph. More precisely,
cyclic subgroups of the $\fkl$-torsion with degenerate self-pairing
correspond to kernels of ascending and horizontal isogenies, while
subgroups with non-degenerate self pairing are kernels of descending
isogenies. Before proving this result, we need the following lemma. 

\begin{lemma}
    \label{lemma:atmost2}
    If $k_{\fkl,A}>0$, then there are at most two subgroups of order
    $\ell$ in $A[\fkl^n]$ such that points in these subgroups have
    degenerate self-pairing.
\end{lemma}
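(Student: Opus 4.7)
The plan is to realise the self-pairing as a quadratic form on the two-dimensional $\F_\ell$-vector space $J[\fkl]$ and count its projective zeros.

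First I would verify that $q\colon J[\fkl]\to\mu_{\ell^{k_{\fkl,J}}}$, $q(P)=T_{\ell^n}(P,P)$, is an $\F_\ell$-quadratic form. Bilinearity of the Tate pairing yields
\[
q(P+Q)=q(P)\,q(Q)\,T_{\ell^n}(P,Q)\,T_{\ell^n}(Q,P),
\]
so $b(P,Q):=T_{\ell^n}(P,Q)\,T_{\ell^n}(Q,P)$ is symmetric bilinear and $q(\lambda P)=q(P)^{\lambda^2}$ for $\lambda\in\F_\ell$. Every subgroup of order $\ell$ in $J[\fkl^n]$ lies inside $J[\fkl]$, and a subgroup $G=\langle P\rangle$ of order $\ell$ consists of points of degenerate self-pairing iff $q(P)=1$, i.e.\ $[P]$ is a projective zero of $q$ in $\mathbb P(J[\fkl])\cong\mathbb P^1(\F_\ell)$. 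A non-zero quadratic form on $\F_\ell^2$ has at most two projective zeros (its dehomogenisation in an affine chart is a polynomial of degree at most two with at most two roots, and the point at infinity contributes only when the leading coefficient vanishes), so the lemma reduces to the claim that $q\not\equiv 0$ on $J[\fkl]$ whenever $k_{\fkl,J}>0$.

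Proving this non-vanishing is the main obstacle. My plan is to use the explicit formula from the proof of Proposition~\ref{Equality}: taking $\ell^n\bar P=P$ one has $T_{\ell^n}(P,P)=W_{\ell^{2n}}(\bar P,\pi^r\bar P-\bar P)$, and the structural description $\pi^r=\lambda I+\ell^{2n-k_{\fkl,J}}A$ on $J[\fkl^{2n}]$ rewrites $q$, after exploiting the alternating property of $W_{\ell^{2n}}$, as an explicit $\F_\ell$-quadratic form whose coefficients are entries of $A\bmod\ell$. The hypothesis $k_{\fkl,J}>0$ forces $A\not\equiv 0\pmod\ell$. The subtle point I expect to wrestle with is that the form could still vanish on the specific subspace $J[\fkl]\subset J[\fkl^{2n}]$ if $A$ happened to be a scalar matrix modulo $\ell$; this degenerate case is ruled out by Proposition~\ref{rationality} applied to $\pi^r$, since a scalar $A$ would force $\pi^r$ to act scalarly on $J[\fkl^{n+1}]$, contradicting the maximality of $n$ built into the definition of $k_{\fkl,J}$.
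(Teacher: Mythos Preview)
Your quadratic-form idea is the right one, but you have placed the form on the wrong group. For $P\in J[\fkl]$ one has $P=\ell^{n-1}\tilde P$ for some $\tilde P\in J[\fkl^n]$, and bilinearity gives
\[
q(P)=T_{\ell^n}(P,P)=T_{\ell^n}(\tilde P,\tilde P)^{\ell^{2(n-1)}}.
\]
Since $T_{\ell^n}(\tilde P,\tilde P)\in\mu_{\ell^{k_{\fkl,J}}}$ and $k_{\fkl,J}\le n\le 2n-2$ whenever $n\ge 2$, your map $q$ is \emph{identically} trivial on $J[\fkl]$ in all cases except $n=1$. Consequently your claimed non-vanishing fails, and the argument in your second paragraph cannot rescue it: the structural description $\pi^r=\lambda I+\ell^{2n-k}A$ feeds into $T_{\ell^n}(\tilde P,\tilde P)$ for $\tilde P$ of order $\ell^n$, not into $T_{\ell^n}(P,P)$ for $P$ of order $\ell$. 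You have also misread the statement: ``degenerate self-pairing'' in the lemma refers, as in Definition~\ref{whatever} and the use in Theorem~\ref{th:pairing-descending}, to the pairing on a cyclic group $G\subset J[\fkl^n]$ of order $\ell^n$ lying above the given order-$\ell$ subgroup, not to the pairing of the order-$\ell$ points themselves.

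The paper's proof carries out exactly your plan but on the correct group: fix a basis $P,Q$ of $J[\fkl^n]$, write $R=aP+bQ$, and set $S(a,b)=\lambda_{R,R}$, a quadratic form with coefficients in $\Z/\ell^n\Z$. The definition of $k_{\fkl,J}$ says precisely that $S$ is divisible by $\ell^{n-k_{\fkl,J}}$ and that $\bar S:=\ell^{-(n-k_{\fkl,J})}S\bmod\ell$ is a \emph{nonzero} quadratic form over $\F_\ell$; no appeal to Proposition~\ref{rationality} or to the matrix $A$ is needed for this non-vanishing. Degeneracy of $T_{\ell^n}(R,R)$ is then equivalent to $\bar S(a,b)=0$, which depends only on $(a,b)\bmod\ell$, i.e.\ on the order-$\ell$ subgroup $\ell^{n-1}\langle R\rangle$. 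Now your counting of projective zeros of a nonzero binary quadratic form applies and finishes the proof.
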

\begin{proof}
    We use the shorthand notation $\lambda_{U,V}=\log(T_{\ell^n}(U,V))$
    for $U,V$ any two $\fkl^{n}$-torsion points, and where log is a
    discrete logarithm function in $\mu_{\ell^n}$.

    Suppose that $P$ and $Q$ are two linearly independent
    $\fkl^{n}$-torsion points.  Since all $\fkl^n$-torsion points $R$ can
    be expressed as $R=aP+bQ$, bilinearity of the $\ell^n$-Tate pairing
    gives
    \begin{align*}
    \lambda_{R,R}&=a^2\lambda_{P,P}+ab\,(\lambda_{P,Q}+\lambda_{Q,P})+b^2\lambda_{Q,Q}
    \pmod{\ell^n},
    \end{align*}
    We now claim that the polynomial
    \begin{eqnarray}\label{PairingPolynomial}
        S(a,b)=a^2\lambda_{P,P}+
            ab\,(\lambda_{P,Q}+\lambda_{Q,P})+b^2\lambda_{Q,Q}
    \end{eqnarray}
    is identically zero modulo $\ell^{n-k_{\fkl,A}-1}$ and nonzero modulo
    $\ell^{n-k_{\fkl,A}}$. Indeed, if it were identically zero modulo
    $\ell^{n-k}$, with $k<k_{\fkl,A}$, then we would have
    $T_{\ell^n}(R,R)\in \mu_{\ell^{k}}$, which contradicts the
    definition of $k_{\fkl,A}$. If it were different from zero modulo
    $\ell^{n-k_{\fkl,A}-1}$, then there would be $R\in A[\fkl^n]$ such
    that $T_{\ell^n}(R,R)$ is an $\ell^{k_{\fkl,A}+1}$-th primitive root
    of unity, again contradicting the definition of $k_{\fkl,A}$.

    Points with degenerate self-pairing are roots of $L$. Hence there are
    at most two subgroups of order $\ell$ with degenerate self-pairing. \qed
\end{proof}

In the remainder of this paper, we define by 
\begin{eqnarray*}
    S_{\fkl,A}(a,b)=a^2\lambda_{P,P}+ab(\lambda_{P,Q}+\lambda_{Q,P})+b^2\lambda_{Q,Q}
\end{eqnarray*}
any polynomial defined by a basis $\{P,Q\}$ of $A[\fkl^n]$ in a manner
similar to the proof of Lemma~\ref{lemma:atmost2}, and using the same
notation $\lambda$. 


\begin{theorem}
    \label{th:pairing-descending} Let $A$ be an p.p.a.v. defined over a finite field $\F_q$ and having locally maximal real multiplication at $\fkl$. Let $P$ be an $\fkl$-torsion point and let $r$ be the smallest integer such that $A[\fkl]\subset A(\F_{q^r})$. Let $n$ be the largest integer such that $A[\fkl^n]\subset A[\F_{q^r}]$. Assume that $k_{\fkl,A}>0$. Consider $G$ a subgroup of $A[\fkl^n]$ such that $\ell^{n-1}G$ is the subgroup generated by $P$. Then the isogeny of kernel $P$ is descending if and only if the Tate pairing is non-degenerate on $G$. It is horizontal or ascending otherwise.
\end{theorem}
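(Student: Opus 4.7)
The plan is to combine a counting bound with a direct analysis via compatibility of the Tate pairing under isogenies.

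By Lemma~\ref{lemma:atmost2}, at most two cyclic subgroups of $J[\fkl]$ have a lift in $J[\fkl^n]$ with degenerate self-pairing. By Proposition~\ref{splitting}, the number of $\fkl$-isogenies from $J$ that are ascending or horizontal is also at most two: one ascending if $J$ is not at the crater, zero to two horizontal if $J$ is at the crater (depending on how $\fkl$ factors in $\cO_K$). It therefore suffices to show that every non-descending $\fkl$-isogeny has kernel whose lift has degenerate self-pairing, and then match the two at-most-two sets to obtain the converse.

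For the direct implication, let $I:J\to J'$ be an $\fkl$-ascending or $\fkl$-horizontal isogeny with kernel $\langle P\rangle$, and let $Q\in J[\fkl^n]$ with $\ell^{n-1}Q=P$. From $\hat I\circ I=[\ell]$ and the Weil-pairing compatibility $W^{(J')}_{\ell^n}(I(A),B) = W^{(J)}_{\ell^n}(A,\hat I(B))$, choosing the lift $\overline{I(Q)}=I(\bar Q)$ and using that $I$ commutes with $\pi^r$ (since $I$ is $\F_{q^r}$-rational), a short computation yields the key identity
\begin{equation*}
T^{(J')}_{\ell^n}(I(Q), I(Q)) \;=\; T^{(J)}_{\ell^n}(Q, Q)^{\ell}.
\end{equation*}
Non-descendance combined with \eqref{conductorValuation} gives $\nu_{\fkl,\cO_{J'}}(\pi^r-\bar\pi^r)\geq \nu_{\fkl,\cO_J}(\pi^r-\bar\pi^r)$, so $J'[\fkl^n]\subseteq J'(\F_{q^r})$, and the argument in the proof of Proposition~\ref{Equality} applied to $J'$ at level $n$ (rather than at its own maximal rationality level) bounds the order of any self-pairing under $T^{(J')}_{\ell^n}$ on $J'[\fkl^n]$ by $\ell^{k_{\fkl,J}}$. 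Since $I(Q)\in J'[\fkl^{n-1}]$, I may write $I(Q)=\ell X$ for some $X\in J'[\fkl^n]$, and bilinearity gives $T^{(J')}_{\ell^n}(I(Q),I(Q)) = T^{(J')}_{\ell^n}(X,X)^{\ell^2}$, of order at most $\ell^{\max(k_{\fkl,J}-2,0)}$. The identity above then shows that $T^{(J)}_{\ell^n}(Q,Q)$ has order at most $\ell^{\max(k_{\fkl,J}-1,0)}$, strictly less than $\ell^{k_{\fkl,J}}$ whenever $k_{\fkl,J}\geq 2$, proving degeneracy in this range.

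The main obstacle I anticipate is the boundary case $k_{\fkl,J}=1$ (the second stability level): here the identity yields only $T^{(J)}_{\ell^n}(Q,Q)^{\ell}=1$, which allows $T^{(J)}_{\ell^n}(Q,Q)$ to be any element of $\mu_\ell$ and does not by itself distinguish degenerate from non-degenerate. This case requires a separate, more explicit argument, most likely an eigen-decomposition of Frobenius on the $\fkl$-adic Tate module of $J$ identifying the kernels of non-descending isogenies with the null directions of the quadratic form $S_{\fkl,J}$ arising in Lemma~\ref{lemma:atmost2}; once this direct identification is made in the two boundary configurations, the counting match completes the proof.
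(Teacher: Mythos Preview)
Your forward implication (non-descending $\Rightarrow$ degenerate) is essentially the contrapositive of the paper's argument and the pairing-compatibility computation you give is correct in spirit; up to the unit coming from Proposition~\ref{polarization} the identity $T^{(J')}_{\ell^n}(I(Q),I(Q))=T^{(J)}_{\ell^n}(Q,Q)^{c\ell}$ holds and yields the bound you state for $k_{\fkl,J}\geq 2$.

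The genuine gap is the converse. Your ``match the two at-most-two sets'' step does not close: from $\{\text{non-descending}\}\subseteq\{\text{degenerate}\}$ together with $|\{\text{degenerate}\}|\leq 2$ and $|\{\text{non-descending}\}|\leq 2$ you cannot conclude equality. Concretely, take $J$ at the top of the $\fkl$-tower with $\fkl$ inert in $\cO_K$; then by Proposition~\ref{splitting} all $\ell+1$ $\fkl$-isogenies are descending, so $\{\text{non-descending}\}=\varnothing$, and nothing you have written rules out $S_{\fkl,J}$ having one or two roots. The theorem asserts it has none, but your argument does not show this. More generally you would need $|\{\text{degenerate}\}|\leq|\{\text{non-descending}\}|$, i.e.\ that the number of roots of $S_{\fkl,J}$ over $\bF_\ell$ is governed by the splitting of $\fkl$ in $\cO_K$ and the position of $J$ in the tower; this is true a posteriori but is essentially the content of the theorem.

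The paper avoids counting for the converse. It argues directly: assume $P$ has degenerate self-pairing yet the isogeny $I:J\to J'$ is descending. Pick any $Q$ with \emph{non}-degenerate self-pairing (one exists since $k_{\fkl,J}>0$); by the forward direction its isogeny $I':J\to J''$ is also descending, so $\End(J')\simeq\End(J'')$ and hence $k_{\fkl,J'}=k_{\fkl,J''}$. One then pushes the pairings of $\bar P$ and $\bar Q$ through both $I$ and $I'$ and reads off that the maximal self-pairing orders on $J'$ and $J''$ differ, a contradiction. This is the missing step you need.

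Your acknowledged boundary case $k_{\fkl,J}=1$ (and likewise $n=1$) is not a minor loose end: the eigen-decomposition idea you sketch is not carried out, and in fact the paper does not argue this case directly either---it passes to an extension field of $\F_{q^r}$ (so that $n$ and $k_{\fkl,J}$ increase) and invokes an auxiliary lemma to reduce to $n>1$, $k_{\fkl,J}>1$. You should do the same rather than attempt a separate Frobenius-eigenspace analysis.
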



\begin{proof}
    We assume $n>1$ and that $k_{\fkl,A}>1$. Otherwise, we consider $A$
    defined over and extension field of $\F_{q^r}$ and apply~\cite[Lemma
    6]{Ionica}. Let $I:A\rightarrow A'$ the isogeny of kernel generated
    by $P$.

    Assume first that $P$ has non-degenerate self-pairing. Let
    $\bar{P}\in G$ such that $\ell^{n-1}\bar P=P$. Then by~\cite[Lemma
    16.2c]{MilneAV} and Lemma~\ref{polarization}, we have
    \begin{eqnarray*}
        T_{\ell^{n-1}}(I(\bar{P}),\alpha(I(\bar{P})))\in \mu_{\ell^{k_{\fkl,A}-1}}\backslash \mu_{\ell^{k_{\fkl,A}-2}},
    \end{eqnarray*}
    where $\alpha$ is a generator of the principal ideal $\fkl'$ such
    that $\fkl\fkl'=\ell \cO_{K_0}$. Since
    $\cO_{K_0}/\alpha\cO_{K_0}\simeq \Z/\ell \Z$, then for any $R\in
    A'[\fkl^n]$, we have $\alpha(R)=\lambda R$, for some $\lambda \in
    \Z/\ell \Z$.  Hence we have
    \begin{eqnarray*}
        T_{\ell^{n-1}}(I(\bar{P}),I(\bar{P}))\in \mu_{\ell^{k_{\fkl,A}-1}}\backslash \mu_{\ell^{k_{\fkl,A}-2}},
    \end{eqnarray*}
    There are two possibilities. Either $A'[\fkl^n]$ is not defined over
    $\F_{q^r}$, or $A'[\fkl^n]$ is defined over $\F_{q^r}$.  In the first
    case, we have $\nu_{\fkl,A'}(\pi^r)< \nu_{\fkl,A}(\pi^r)$ and the
    isogeny is descending.

    Assume now that $A'[\fkl^n]$ is defined over $\F_{q^r}$. Then let
    $P_1$ such that $I(\bar{P})=\ell P_1$. Then
    \begin{eqnarray*}
        T_{\ell^{n}}(P_1,P_1))\in \mu_{\ell^{k_{\fkl,A}+1}}\backslash \mu_{\ell^{k_{\fkl,A}}}.
    \end{eqnarray*}
    By using Proposition~\ref{Equality}, it follows that
    $\nu_{\fkl,A'}(\pi^r)< \nu_{\fkl,A}(\pi^r)$.
    Hence the isogeny is descending.

    Suppose now that the point $P$ has degenerate self-pairing and that
    the isogeny $I$ is descending. Since there are at most 2 points in
    $A[\fkl^n]$ with  degenerate self-pairing, there is at least one
    point in $A[\fkl^n]$ with non-degenerate self-pairing. This point,
    that we denote by $Q$, generates the kernel of a descending isogeny
    $I':A\rightarrow A''$ such that $\End(A')\simeq \End(A'')$. We assume
    first that $A'[\fkl^n]$ and $A''[\fkl^n]$ are not defined over
    $\F_{q^r}$. Then we have
    \begin{eqnarray*}
        T_{\ell^{n-1}}(I(\bar{P}),I(\bar{P})))\in
            \mu_{\ell^{k_{\fkl,A}-2}},
        & T_{\ell^{n-1}}(\ell I(\bar{Q}),\ell (I(\bar{Q})))\in
            \mu_{\ell^{k_{\fkl,A}-3}}\\
        T_{\ell^{n-1}}(\ell I'(\bar{P}),\ell I'(\bar{P}))\in
            \mu_{\ell^{k_{\fkl,A}-4}},
        & T_{\ell^{n-1}}(I'(\bar{Q}),I'(\bar{Q})))\in
            \mu_{\ell^{k_{\fkl,A}-1}}\backslash \mu_{\ell^{k_{\fkl}-2}}\\
    \end{eqnarray*}
    Hence $k_{\fkl,A'}\neq k_{\fkl,A''}$, which is a contradiction. The
    case where  $A'[\fkl^n]$ and $A''[\fkl^n]$ are defined over
    $\F_{q^r}$ is similar. \qed
\end{proof}

\section{Endomorphism ring computation - a depth-first algorithm}
\label{sec:algorithm}

We keep the same setting and notations. In particular, $\ell$ is a fixed
odd prime, and we assume that $\ell \cO_{K_0}=\fkl_1\fkl_2$. We take $J$ to be the Jacobian of a genus 2 curve defined over $\F_q$, which will allow us to compute the Tate pairing efficiently (as explained in Section~\ref{subsec:TatePairing}). We intend to compute the endomorphism ring of $J$, with prior knowledge of the Zeta function of $J$, and the fact that $\End(J)_{\ell}$ contains $\cO_{K_0,\ell}$. We note that this property holds trivially in the case where $\bZ[\pi,\bar\pi]_{\ell}$ contains $\cO_{K_0,\ell}$, although this is not a necessary condition for the algorithm here to work.

\subsection{Description of the algorithm}

A consequence of Proposition~\ref{LLisogenies} is that there are at most
$(\ell+1)(\ell+1)$ rational $\ell$-isogenies preserving the real
multiplication. Since we can compute $\ell$-isogenies over finite
fields~\cite{CosRob,AVISOGENIES}, we use this result to give an algorithm
for computing $\nu_{\fkl,J}(\pi)$, and determine endomorphism
rings locally at $\ell$, by placing them properly in the order lattice as represented in Figure~\ref{fig:order-lattice}.

We define $u_i$ to be the smallest integer such that $\pi^{u_i}-1\in
\fkl_i\cO_K$, and $u$ the smallest integer such that $\pi^{u}-1
\in\ell\cO_K$ (we have $u=\lcm(u_1,u_2)$).  The value of $u$ depends
naturally on the splitting of $\ell$ in $K$ (see~\cite[Prop.
6.2]{FreLau}).  As the algorithm proceeds, the walk on the isogeny graph
considers Jacobians over the extension field $\bF_{p^u}$.


\paragraph{Idea of the algorithm.}
As noticed by Lemma~\ref{lemma:ordlat-2dim}, we can achieve our goal by considering \emph{separately} the position of the endomorphism ring within the order
lattice with respect to $\fkl_1$ first, and then with respect to
$\fkl_2$. The algorithm below is in effect run twice.

Each move in the isogeny graph corresponds to taking an
$\ell$-isogeny, which is a computationally accessible object. In
our prospect to understand the position of the endomorphism ring with
respect to $\fkl_1$ in Figure~\ref{fig:order-lattice}, we shall not
consider what happens with respect to $\fkl_2$, and vice-versa. Our input
for computing an $\ell$-isogeny is a Weil-isotropic kernel.
Because we are interested in isogenies preserving the real
multiplication, this entails that we consider kernels of the form
$K_1+K_2$, with $K_i$, $i=\overline{1,2}$, a cyclic subgroup of $J[\fkl_i]$. By
Proposition~\ref{prop:weil-isotropic-l1l2}, such a group is
Weil-isotropic. There are up to $(\ell+1)^2$ such subgroups.

Let $\fkl$ be either $\fkl_1$ or $\fkl_2$.  The algorithm computes
$\nu_{\fkl,J}(\pi)$ in two stages. 

Our algorithm stops when the floor of rationality has been hit in $\fkl$,
i.e. the only rational cyclic group in $J[\fkl]$ is the one generating
the kernel of the ascending $\fkl$-isogeny. If $(u,\ell)=1$, one may
prove that testing rationality for the isogenies is equivalent to
$J[\fkl]\subset J(\F_{q^u})$.  Otherwise, in order to test rationality
for the isogeny at each step in the algorithm, one has to check whether
the kernel of the isogeny is $\F_q$-rational.

\paragraph{Step 1.} The idea is to walk the isogeny graph until we reach
a Jacobian which is on the second stability level or below (which might
already be the case, in which case we proceed to Step 2).  If the
Jacobian $J$ is above the second stability level, we need to construct
several chains of $\ell$-isogenies, not backtracking with respect
to $\fkl$, to make sure at least one of them is descending in the
$\fkl$-direction.  This proceeds exactly as in~\cite{FouMor}. The number
of chains depends on the number of horizontal isogenies and thus on the
splitting of $\fkl$ in $K$ (due to the action of the Shimura class
group). If $\fkl$ is split, one  needs three isogeny chains to ensure
that one path is descending.

If an isogeny in the chain is descending, then the path continues
descending, assuming the isogeny walk does not backtrack with respect to
$\fkl$ (this aspect is discussed further below).  We are done
constructing a chain when we have reached the second stability level for
$\fkl$, which can be checked by computing self-pairing of appropriate
$\ell^n$-torsion points. The length of the shortest path gives the
correct level difference between the second stability level and the
Jacobian~$J$. The pseudocode for this step is given in Algorithm~\ref{LocallyMaximal}.

\begin{figure}
    \begin{center}
        \begin{tikzpicture}[
            point/.style={circle, fill=black, minimum size=3pt, inner sep=0pt},
            every node/.style={point},
            mu1/.style={orange, thick},
            mu2/.style={violet, thick},
            every child/.style={mu1},
            level 1/.style={sibling distance=6em, level distance=1.66em},
            level 2/.style={sibling distance=2em, level distance=1.5em},
            level 3/.style={sibling distance=0.66em, level distance=2em},
            skip loop/.style={rounded corners, to path={-- ++(2em,0) --
            ++(0,#1) -| ($(\tikztotarget)+(-2em,0)$) -- ++(2em,0)}},
        ]
        \path node (A) {}
                child { node (A1) {}
                    child { node (A11) {}
                        child { node (A111) {} }
                        child { node (A112) {} }
                        child { node (A113) {} }
                    }
                    child { node (A12) {}
                        child { node (A121) {} }
                        child { node (A122) {} }
                        child { node (A123) {} }
                    }
                    child { node (A13) {}
                        child { node (A131) {} }
                        child { node (A132) {} }
                        child { node (A133) {} }
                    }
                }
                child { node (A2) {}
                    child { node (A21) {}
                        child { node (A211) {} }
                        child { node (A212) {} }
                        child { node (A213) {} }
                    }
                    child { node (A22) {}
                        child { node (A221) {} }
                        child { node (A222) {} }
                        child { node (A223) {} }
                    }
                    child { node (A23) {}
                        child { node (A231) {} }
                        child { node (A232) {} }
                        child { node (A233) {} }
                    }
                }
                ;
        \node[right=12em of A] (B) {};
        \path (B)
                child { node (B1) {}
                    child { node (B11) {}
                        child { node (B111) {} }
                        child { node (B112) {} }
                        child { node (B113) {} }
                    }
                    child { node (B12) {}
                        child { node (B121) {} }
                        child { node (B122) {} }
                        child { node (B123) {} }
                    }
                    child { node (B13) {}
                        child { node (B131) {} }
                        child { node (B132) {} }
                        child { node (B133) {} }
                    }
                }
                child { node (B2) {}
                    child { node (B21) {}
                        child { node (B211) {} }
                        child { node (B212) {} }
                        child { node (B213) {} }
                    }
                    child { node (B22) {}
                        child { node (B221) {} }
                        child { node (B222) {} }
                        child { node (B223) {} }
                    }
                    child { node (B23) {}
                        child { node (B231) {} }
                        child { node (B232) {} }
                        child { node (B233) {} }
                    }
                }
                ;

                \draw[mu1]
                (A) -- (B) edge[skip loop=1em, dashed] (A);
            \draw[very thick, green!60!black]
            (B) -- (A) -- (A1) edge[->] (A13)
            (B) -- (B2) -- (B21) edge[->] (B213)
            (B) -- (B1) -- (B12) edge[->] (B121);
            \node[minimum size=6pt, fill=green!60!black] at (B) {};
        \end{tikzpicture}
        \caption{\label{fig:3paths}At least one in three non-backtracking
            paths has minimum distance
        to a given level.}
    \end{center}
\end{figure}

Figure~\ref{fig:3paths} represents for $\ell=3$ a situation where only
three non-backtracking paths can guarantee that at least one of them is
consistently descending.


\paragraph{Step 2.} We now assume that $J$ is on the second stability level or
below, with respect to $\fkl$. We construct a non-backtracking path of
$\ell$-isogenies, which are consistently descending with respect
to $\fkl$. In virtue of Theorem~\ref{th:pairing-descending}, this can be
achieved by picking Weil-isotropic kernels whose $\fkl$-part (which is
cyclic) correspond to a non-degenerate self-pairing $T_{\ell^n}(P,P)$.
We stop when we have reached the floor of rationality in $\fkl$, at which
point the valuation $\nu_{\fkl,J}(\pi)$ is obtained.

Note that at each step taken in the graph, if $J[\fkl']$ (where $\fkl'$
is the other ideal) is not rational, then we ascend in the
$\fkl'$-direction, in order to compute an $\ell$-isogeny. As said
above, this has no impact on the consideration of what happens with
respect to $\fkl$. This step is summarized in Algorithm~\ref{LocallyMaximal2}.

\paragraph{Ensuring isogeny walks are not backtracking}
\label{ensuring:nbt}
As said above, ensuring that the isogeny walk in Step 2 is not
backtracking is essentially guaranteed by
Theorem~\ref{th:pairing-descending}. Things are more subtle for Step~1.
Let $J_1$ be a starting Jacobian, and $I: J_1\rightarrow J_2$ an
$\ell$-isogeny whose kernel is $V\subset J[\ell]$.  Recall that
there are at most $(\ell+1)^2$ Weil-isotropic kernels of the form
$K_1+K_2$ within $J_2[\fkl_1]+J_2[\fkl_2]$ for candidate isogenies
$I':J_2\rightarrow J_1$. All such isogenies whose kernel has the same
component on $J_2[\fkl_1]$ as the dual isogeny $\hat I$ are backtracking
with respect to $\fkl_1$ in the isogeny graph. One must therefore
identify the dual isogeny $\hat I$ and its kernel.  Since $\hat I$ is
such that $\hat I\circ I=[\ell]$, we have that $\Ker \hat I =
I(J_1[\ell])$. If computing $I(J_1[\ell])$ is possible\footnote{Computing
isogenous Jacobians by isogenies is easier than computing images of
divisors.  The \texttt{avisogenies} software~\cite{AVISOGENIES} performs
the former since its inception, and the latter in its development
version, as of 2014.}, this solves the issue. If not, then enumerating
all possible kernels until the dual isogeny is identified is possible,
albeit slower.

\begin{algorithm}[h!]
\caption{Computing the endomorphism ring: Step 1}\label{LocallyMaximal}
\begin{algorithmic}[1]
\REQUIRE A Jacobian $J$ of a genus-2 curve defined over $\F_q$ with CM by a field $K$, and $\alpha$ such that $\fkl=\alpha\cO_K$ divides $\ell\cO_K$. We require that $J$ is above the second stability level with respect to $\fkl$.
\ENSURE A Jacobian $J'$ on or below the second stability level with
respect to $\fkl$, and the distance from $J$ to this Jacobian.
\STATE Let $\pi$ be the Frobenius endomorphism of $J$ and $u$ be the
    smallest integer s.t. $\pi^u-1 \equiv 0 \pmod {\ell\cO_K}$.
\STATE Compute a basis of $J[\ell^{\infty}](\F_{q^u})$.
\STATE Compute $a,b\in\bQ$ such that $\alpha=a+b(\pi+\bar{\pi})$.
\STATE Let $n$ be the largest integer such that $J[\fkl^{n}]\subset J(\F_{q^u})$.  
\STATE $J_{1}\leftarrow J$, $J_{2}\leftarrow J$, $J_{3}\leftarrow J$.
\STATE $\kappa_{1}\leftarrow\{0\}$, $\kappa_{2}\leftarrow\{0\}$, $\kappa_{3}\leftarrow\{0\}$.
\STATE $\mathrm{length}\leftarrow 0$.
\WHILE{\textbf{true}} 
\STATE $\mathrm{length}\leftarrow \mathrm{length}+1$.
\FORALL {i=1,2,3}
\STATE Compute the matrix of $\pi$ in $J_i[\ell^{\infty}](\bF_{q^u})$.
\STATE Compute bases for $J_i[\fkl](\F_{q^u})$ and $J_i[\fkl'](\F_{q^u})$ using
$\alpha=a+b(\pi+\bar{\pi})$.
\STATE Pick at random $P_i\in J_i[\fkl](\F_{q^u})$ such that $P_i\notin\kappa_i$.
\STATE Pick at random $P'_i\in J_i[{\fkl'}](\F_{q^u})$.
\STATE Compute the $\ell$-isogeny $I: J_i\rightarrow J'_i=J_i/\langle
P_i,P'_i\rangle$.
\STATE $\kappa_i\leftarrow I(J[\fkl])$; $J_i\leftarrow J'_i$.
\STATE Compute $S_{\fkl,J}$.
\IF {$S_{\fkl,J}\not=0$}
\STATE \textbf{return} $\mathrm{length}$.
\ENDIF
\ENDFOR
\ENDWHILE
\end{algorithmic}
\end{algorithm}

\begin{algorithm}
\caption{Computing the endomorphism ring: Step 2}\label{LocallyMaximal2}
\begin{algorithmic}[1]
\REQUIRE A Jacobian $J$ of a genus-2 curve defined over $\F_q$ with CM by a field, and $\alpha$ such that $\fkl=\alpha\cO_K$ divides $\ell\cO_K$. We require that $J$ is on or below the second stability level with respect to $\fkl$ (see Algorithm~\ref{LocallyMaximal}).
\ENSURE The $\fkl$-distance from $J$ to the floor.  
\STATE $\mathrm{length}\leftarrow 0$.
\WHILE{\textbf{true}} 
\STATE Let $\pi$ be the Frobenius of $J$ and let $u$ the smallest integer s.t. $\pi^u-1 \equiv 0 \pmod {\ell\cO_K}$ and compute a basis of $J[\ell^{\infty}](\F_{q^u})$.
\STATE Let $n$ the largest integer such that $J[\fkl^{n}]\subset J(\F_{q^u})$.  
\IF {$n=0$}
\STATE \textbf{return} $\mathrm{length}$.
\ENDIF
\STATE Compute the matrix of $\pi$ in $J_i[\ell^{\infty}](\bF_{q^u})$.
\STATE Let $\fkl'=\ell/\fkl$. Compute bases for $J_i[\fkl](\F_{q^u})$ and $J_i[\fkl'](\F_{q^u})$ 
\STATE Consider $P_1,P_2$ a basis of $J[\fkl^{n}](\F_{q^u})$
\STATE Compute $S_{\fkl,J}$ and take $x_1,x_2 \in \F_{\ell}$ such that $S_{\fkl,J}(x_1,x_2)\neq 0$. 
\STATE $P\leftarrow \ell^{n-1}(x_1P_1+x_1P_2)$.
\STATE Pick at random $P'_i\in J_i[{\fkl'}](\F_{q^u})$.
\STATE Compute the $\ell$-isogeny $I:J'\leftarrow J/\langle P, P'\rangle$
\STATE $J\leftarrow J'$.
\STATE $\mathrm{length}\leftarrow \mathrm{length}+1$.
\ENDWHILE
\end{algorithmic}
\end{algorithm}

\subsection{Complexity analysis}
\label{subsec:EisentragerLauter}
In this Section, we give a complexity analysis of
Algorithms~\ref{LocallyMaximal} and~\ref{LocallyMaximal2} and compare their performance to that of the Eisentr\"ager-Lauter algorithm for computing the endomorphism ring locally at $\ell$, for small $\ell$. If $\ell$ is large, one should use Bisson's algorithm~\cite{BissonGenus2}. Computing a
bound on $\ell$ for which one should switch between the two algorithms
and a full complexity analysis of the algorithm for determining the
endomorphism ring completely is beyond the scope of this paper.

\paragraph{The Eisentr\"ager-Lauter algorithm}
For completeness, we briefly recall the Eisentr\"ager-Lauter
algorithm~\cite{EisLau}.  For a fixed order $\cO$ in the lattice of
orders of $K$, the algorithm tests whether $\cO\subset\End(J)$.  This is
done by computing a $\Z$-basis of \cO\ and checking whether its elements
are endomorphisms of $J$ or not.  In order to test if $\alpha \in \cO$ is
an endomorphism, we write
$$\alpha=\frac{a_0+a_1\pi+a_2\pi^2+a_3\pi^3}{N},$$ with $a_i$ integers
whose greatest common divisor is coprime to $N$ ($N$ is the smallest
integer such that $N\alpha \in \Z[\pi]$).  Using \cite[Prop. 7]{EisLau},
we get $\alpha\in \End(J)$ if and only if $\sum_i a_i\pi^i$ acts as zero
on the $N$-torsion.

Freeman and Lauter~\cite{FreLau} work locally modulo prime divisors of
$N$. For all orders such that $\bZ[\pi]\subset\cO\subset\cO_K$, the
denominators $N$ considered are divisors of $[\cO_K:\Z[\pi,\bar\pi]]$
(see~\cite[Lemma 3.3 and Corollary 3.6]{FreLau}).
Moreover, Freeman and Lauter show that if $N$ factors as
$\ell_1^{d_1}\ell_2^{d_2}\ldots \ell_r^{d_r}$, it suffices to check if
$$\frac{a_0+a_1\pi+a_2\pi^2+a_3\pi^3}{\ell_i^{d_i}},$$ 
is an endomorphism, for all $i$.  The
advantage of working locally is that instead of working over the extension field generated by the coordinates of the $N$-torsion points, we may work over the field of definition of the $\ell_i^{d_i}$-torsion, for every prime factor $\ell_i$ separately. Nevertheless, it should be noted that the exponent $d_i$ can be as large as the $\ell_i$-valuation of the conductor $[\cO_K:\bZ[\pi,\bar{\pi}]]$.

\bigskip
We now set some notations for giving the complexity of algorithms from
Section~\ref{sec:algorithm} as well as that of the Eisentr\"ager-Lauter algorithm. We consider the complexity for one odd prime $\ell$ dividing
$[\cO_K:\Z[\pi,\bar\pi]]$, and assume that $(\ell,p)=1$. Following the
notation in Section~\ref{GraphStructure}, we denote
$h_i=\nu_{\fkl_i,\cO_K}(\pi)$ for $i=1,2$. It follows that
$\nu_\ell([\cO_K:\cO_{K_0}[\pi,\bar\pi]])=h_1+h_2$. The order
$\bZ[\pi,\bar\pi]$ might be smaller than $\cO_{K_0}[\pi,\bar{\pi}]$, thus
we denote $h_0=\nu_\ell([\cO_{K_0}[\pi,\bar{\pi}]:\bZ[\pi,\bar\pi]])$.
Note though that for most practical uses of our algorithm, we expect to
gain knowledge that $\End(J)$ has maximal real multiplication from the
fact that $\bZ[\pi,\bar\pi]$ is an $\cO_{K_0}$-order itself, which
implies $h_0=0$. It makes sense to neglect $h_0$ in this case.
Finally, we let as before $u$ be the smallest integer such that
$\pi^u\equiv1\mod\ell\cO_K$, so that the $\ell$-torsion on $J$ is defined
over $\F_{q^u}$. According to~\cite[Prop. 6.2]{FreLau}, we have $u\in
O(\ell^2)$ since $\ell$ splits in $K_0$.

We now give the complexity of the algorithm from Section~\ref{sec:algorithm}. First we compute a basis of the ``$\ell^\infty$-torsion over $\F_{q^u}$'', i.e.\ the $\ell$-Sylow subgroup of $J(\F_{q^u})$, which corresponds to $J[\ell^n](\F_{q^u})$ for some integer $n$. We assume that the zeta function of $J$ and the
factorization of $\#J(\F_{q^u})=\ell^sm$ are given. We denote by $M(u)$
the number of a multiplications in $\F_q$ needed to perform one
multiplication in the extension field of degree $u$. The computation of
the Sylow subgroup basis costs $O(M(u)(u\log q+n\ell^2))$ operations in
$\F_q$, as described in~\cite[§3]{BiCoRo14}.   

Then we compute the matrix of the Frobenius on the $\ell$-torsion. Using
this matrix, we write down the matrices of $\alpha_1$ and $\alpha_2$ in
terms of the the matrix of $\pi+\bar{\pi}$. Finally, computing
$J[\fkl_i]$ for $i=1,2$ is just linear algebra and has negligible cost.
For each $i$, the cost of computing the Tate pairing is related to the
integers $r_i$ and $n_i$ as defined in Proposition~\ref{Equality}. We
bound these by $r_i\leq u$, and $n_i\leq n$. Computing the Tate pairing
thus costs $O(M(u)(n\log \ell+u\log q))$ operations in $\F_q$, where the
first term is the cost of Miller's algorithm and the second one is the
cost for the final exponentiation.

The cost of computing an $\ell$-isogeny using the algorithm of
Cosset and Robert~\cite{CosRob} is $O(M(u)\ell^4)$ operations in $\F_q$.
We conclude that the cost of Algorithms~\ref{LocallyMaximal}
and~\ref{LocallyMaximal2} is $$\text{cost}_{\text{algorithms
1+2}}=O(\max(h_1,h_2)M(u)(u\log q+n\ell^2+\ell^4)).$$

The complexity of Freeman and Lauter's algorithm is dominated by the cost
of computing the $\ell$-Sylow subgroup of the Jacobian defined over the
extension field containing the $\ell^{d}$-torsion, where $d$ is bounded
by
$\nu_\ell([\cO_K:\bZ[\pi]])=\nu_\ell([\cO_K:\bZ[\pi,\bar\pi]])=h_0+h_1+h_2$
(recall that $\ell$ and $\pi$ are coprime). The degree of this extension
field is $u\ell^{d-1}$ by~\cite[Prop. 6.3]{FreLau}.  This leads to
$$\text{cost}_{\text{EL}}=O(M(u\ell^{d-1})(u\ell^{d-1}\log q+(n+d-1)\ell^2)).$$
    
\begin{table}
    \begin{center}
        \begin{tabular}{|c|c|}
            \hline
            Freeman and Lauter  & This work (Algorithms~\ref{LocallyMaximal}
            and~\ref{LocallyMaximal2}) \\
            \hline
            {$O(M(u\ell^{d-1})(u\ell^{d-1}\log q+(n+d-1)\ell^2))$}
            &{$O(\max(h_1,h_2)M(u)(u\log q+n\ell^2+\ell^4))$}\\
            \hline
        \end{tabular}
        \medskip
        \caption{\label{Table1} Cost for computing the endomorphism ring
        locally at $\ell$; we have $u=O(\ell^2)$, $d\leq h_0+h_1+h_2$, and $h_0=0$ is a typical condition for this work to apply}.
    \end{center}
\end{table}

\subsection{Practical experiments}
Let $J$ be the Jacobian of the hyperelliptic curve defined by
\begin{align*}
y^2 &= 17422020 + 847562x + 37917221x^2 + 268754x^3 + 4882157x^4 + 14143796x^5 + 50949756x^6
\end{align*} 
over $\F_p$, with $p=53050573$. The curve has complex multiplication by
$\cO_K$, with $K=\Q(\zeta)$, defined by the equation
$\zeta^4+175\zeta^2+6925=0$. A Weil number for this Jacobian, as well as
the corresponding characteristic polynomial, are given as follows:
\begin{gather*}
    \pi= \frac{1}{15}(45\zeta^3 + 422\zeta^2 + 14940\zeta + 79450),\\
    \pi^4-s_1\pi^3+s_2\pi^2-s_1p\pi+p^2=0,\ \text{with}\ s_1=11340,\
    s_2=135934954.
\end{gather*}

The real multiplication subfield $K_0$ has class number~1, and~$\ell=3$
splits in $K_0$ as $3=\alpha_1\alpha_2$.  The corresponding valuations of
the Frobenius are $\nu_{\alpha_1,\cO_K}(\pi)=10$ and
$\nu_{\alpha_2,\cO_K}(\pi)=2$. The analogue to
Figure~\ref{fig:order-lattice} is thus a lattice of 20 possible orders to
choose from in order to determine $\End(J)$.

Our algorithm computes the 3-torsion group, which is defined over
$\F_{p^2}$.  Note that in contrast, the Eisentr\"ager-Lauter algorithm
computes the $3^{10}$-torsion group, defined over $\F_{p^{39366}}$. 

We report experimental results of our implementation, using Magma 2.20-6
and \texttt{avisogenies} 0.6, on a Intel Core i5-4570 CPU with clock
frequency 3.2 GHz.  Our computation of $\End(J)$ with
Algorithms~\ref{LocallyMaximal} and~\ref{LocallyMaximal2} goes as
follows. Computation shows that the Tate pairing is degenerate on
$J[\fkl_1]$. We thus use Algorithm~\ref{LocallyMaximal} to find a
shortest path from $J$, not backtracking with respect to $\fkl_1$, and
reaching a Jacobian on or above the second stability level. This path is
made of $\ell$-isogenies defined over $\F_p$, and computed with
\texttt{avisogenies} from their kernels (here, only what happens with
respect to $\fkl_1$ is interesting). Such a path with length 3 is found
in 20 seconds, where most of the time (15 seconds) is spent on ensuring
that the isogeny walks are non-backtracking (see remark on
page~\pageref{ensuring:nbt}). From there, a consistently descending path
of length $5$ down to the floor is constructed using
Algorithm~\ref{LocallyMaximal2} in 3 seconds. This leads to
$\nu_{\fkl_1,J}(\pi)=8$. As for $\fkl_2$, the Jacobian $J$ is
below the second stability level, so Algorithm~\ref{LocallyMaximal2}
applies, and finds $\nu_{\fkl_2,J}(\pi)=1$ in 1 second. In total,
the computation $\End(J)$ in this example takes 24 seconds.

\section{Conclusion}

We have described the structure of the degree-$\ell$ isogeny graph
between abelian surfaces with maximal real multiplication. From a
computational point of view, we exploited the structure of the graph to
describe an algorithm computing locally at $\ell$ the endomorphism ring
of an abelian surface with maximal real multiplication.

In this work we used the assumption that $K_0$ has class number 1 to give the structure of the lattice of orders with locally maximal real multiplication at $\ell$ and also assumed that the ideal $\fkl$ is trivial in the narrow class group of $K_0$. This allowed us to exhibit an $\fkl$-isogeny graph between principally polarized abelian varieties. 

For a generalization of this work the case where $K_0$ has class number greater the reader is referred to~\cite{Brooks,Martindale}. In particular, the assumption that $\fkl$ is trivial in the narrow class group is left out in~\cite{Brooks}. This leads to an $\fkl$-isogeny graph between polarized abelian varieties, belonging to different polarization classes.

Further research is needed to extend these results to a general setting and compute endomorphism ring in the case where $\ell$ divides $[\cO_{K_0}:\Z[\pi+\bar{\pi}]]$. Our belief is that the right approach to follow is first to determine the real multiplication order $\cO_0$ and secondly to use an algorithm similar to ours, exploiting the structure of the isogeny graph between principally polarized abelian variety with real multiplication by $\cO_0$.

The reader should also note recent results and ongoing work on the computation of $\fkl$-isogenies (via modular polynomials~\cite{Milio,Martindale} and~\cite{Dudeanu}). It would be interesting to compare the performance of algorithms navigating into the $\fkl$-isogeny graphs against that of navigating in $\ell$-isogeny graphs, in order to see whether our methods for computing endomorphism rings can be improved.

\bibliographystyle{plain}
\bibliography{sorina1}

\appendix

\section{Appendix: additional example}

We consider the quartic CM field $K$ with defining equation
$X^4+81X^2+1181$. The real subfield is $K_0=\Q(\sqrt{1837})$, and has
class number~1.  In the real subfield $K_0$, we have
$3=\alpha_1\alpha_2$, with $\alpha_1=\frac{43+\sqrt{1837}}{2}$ and
$\alpha_2$ its conjugate.  We consider a Weil number $\pi$ of relative
norm $85201$ in $\cO_K$. We have that
$\nu_{\alpha_1}(\fkf_{\Z[\pi,\bar\pi]})=2$ and
$\nu_{\alpha_2}(\fkf_{\Z[\pi,\bar\pi]})=1$. Note that $\fkl_1$ is inert
and $\fkl_2$ is split in $K$. Our implementation with Magma produced the
graph in Figure~\ref{fig:LargerGraph}. 

\begin{figure}[h!]
%
    \begin{center}
        \includegraphics[scale=0.4,angle=90]{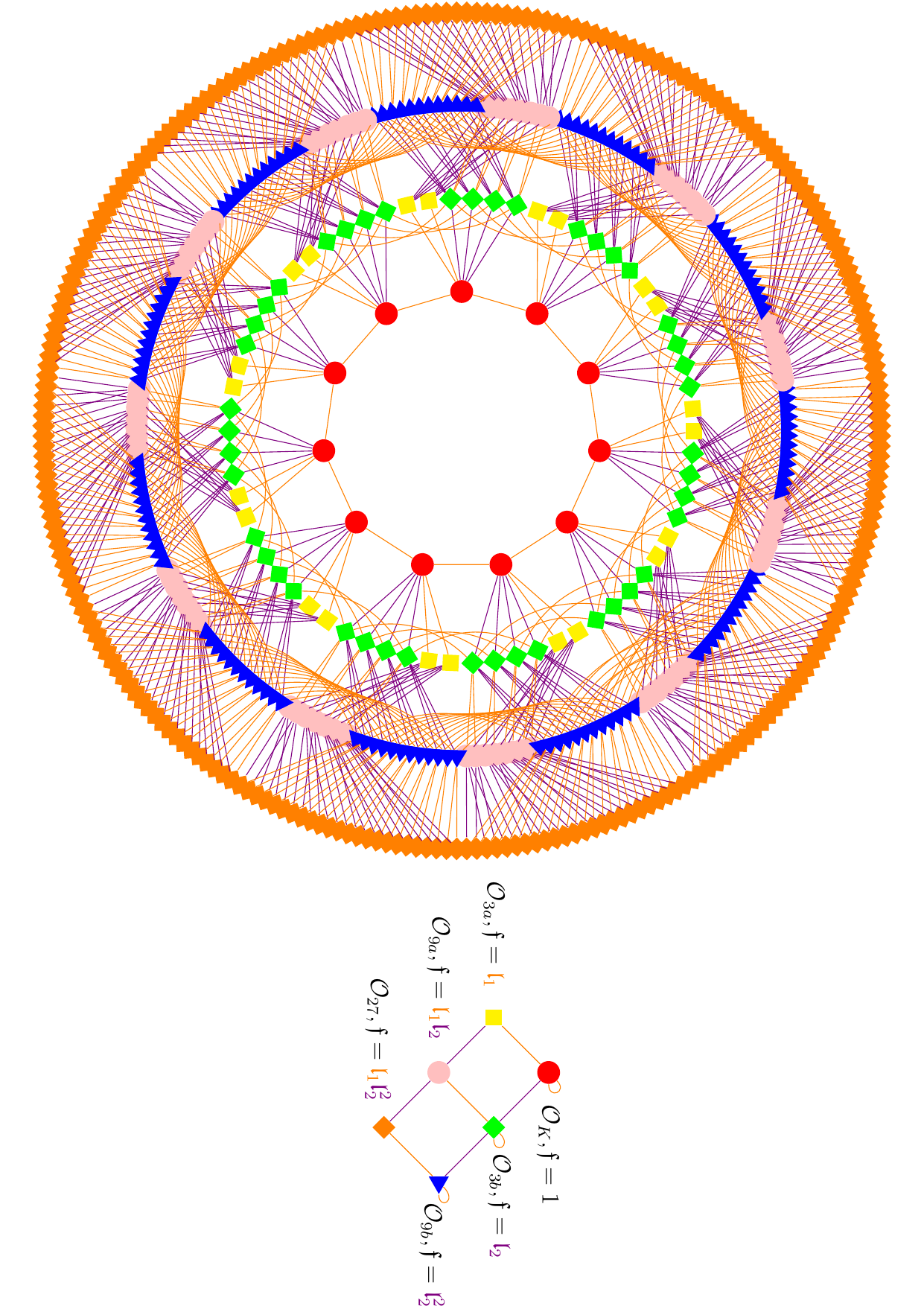}
\caption{\label{fig:LargerGraph} A larger example}
    \end{center}
\end{figure}


\end{document}